\tikzstyle{vertex}=[circle, draw, inner sep=0pt, minimum size=13pt]
\newcommand{\mf}{\mathfrak}
\newcommand{\dd}{\hspace{.1cm}|\hspace{.1cm}}
\newcommand{\A}{{\rm A}}
\newcommand{\diag}{\rm diag}
\newcommand{\ul}{\underline}
\newcommand{\ind}{{\rm ind \hspace{.1cm}}}
\newcommand{\bd}{\begin{description}}
\newcommand{\ed}{\end{description}}
\newcommand{\be}{\begin{enumerate}}
\newcommand{\ee}{\end{enumerate}}
\newcommand{\bi}{\begin{itemize}}
\newcommand{\ei}{\end{itemize}}
\newcommand{\bt}{\begin{tabular}}
\newcommand{\et}{\end{tabular}}
\newcommand{\beq}{\begin{equation}}
\newcommand{\eeq}{\end{equation}}
\newcommand{\beqs}{\begin{eqnarray*}}
\newcommand{\eeqs}{\end{eqnarray*}}
\newcommand{\ad}{\text{ad}}
\newcommand{\rk}{{\rm rk \hspace{.1cm}}}
\newcommand{\sright}[1]{\sigma(\overrightarrow{#1})}
\newcommand{\sleft}[1]{\sigma(\overleftarrow{#1})}
\newcommand{\tright}[1]{\tau(\overrightarrow{#1})}
\newcommand{\tleft}[1]{\tau(\overleftarrow{#1})}
\newcommand{\floor}[1]{\left \lfloor #1 \right \rfloor }
\newcommand{\sspace}{.16}
\newtheorem{theorem}{Theorem} 
\newtheorem{lemma}[theorem]{Lemma} 
\newtheorem{case}{Case}
\newtheorem{conjecture}[theorem]{Conjecture}
\theoremstyle{definition}
\newtheorem{definition}[theorem]{Definition}
\newtheorem{example}[theorem]{Example}
\newtheorem{remark}[theorem]{Remark}
\begin{document}
\title{\bf The unbroken spectrum of type-A Frobenius seaweeds}
\author{Vincent E. Coll, Jr., Matthew Hyatt, and
Colton Magnant}%, and Hua Wang}

\maketitle

\noindent
\textit{Department of Mathematics, Lehigh University, Bethlehem, PA, USA}\\
\textit{Mathematics Department, Pace University, Pleasantville, NY, USA}\\
\textit{Department of Mathematical Sciences, Georgia Southern University, Statesboro, GA, USA
}

\begin{abstract}
\noindent
If $\mathfrak{g}$ is a Frobenius Lie algebra, then for certain $F\in \mathfrak{g}^*$ the natural map $\mathfrak{g}\longrightarrow \mathfrak{g}^* $ 
given by $x \longmapsto F[x,-]$ is an isomorphism.  The inverse image of $F$ under this isomorphism is called a principal element.  We show that if $\mathfrak{g}$ is a Frobenius seaweed subalgebra of $A_{n-1}=\mathfrak{sl}(n)$ then the spectrum of the adjoint of a principal element consists of an unbroken set of integers whose multiplicites have a symmetric distribution.   Our proof methods are constructive and combinatorial in nature.  
\end{abstract}

\noindent
\textit{Mathematics Subject Classification 2010}: 17B20, 05E15

\noindent 
\textit{Key Words and Phrases}: Frobenius Lie algebra, seaweed, biparabolic, principal element, meander, eigenvalue, spectrum, adjoint representation

\section{Introduction}
A \textit{biparabolic} subalgebra of a reductive Lie algebra 
$\mathfrak{g}$ is the intersection of two parabolic subalgebras whose sum is $\mathfrak{g}$.  They  were first introduced 
in the case $\mathfrak{g}=\mathfrak{sl}(n)$ by Dergachev and A. Kirillov \textbf{\cite{DK}} where such algebras were called Lie algebras of \textit{seaweed }type.  Associated to each seaweed subalgebra of the simple Lie algebra $A_{n-1}=\mathfrak{sl}(n)$ is a certain planar graph called a \textit{meander}. One of the main results of \textbf{\cite{DK}} is that the algebra's index may be computed by counting the connected components and cycles of its associated meander.\footnote{Recently, the current authors have extended this line of inquiry to seaweed subalgebras of the simple Lie algebra $C_n=\mathfrak{sp}(2n)$ by introducing the notion of a \textit{symplectic meander} \textbf{\textbf{\cite{CHM}}}  (see also \textbf{\cite{PY}}).}
Of particular interest are those seaweed algebras of $\mathfrak{sl}(n)$ whose associated meander graph consists a single path. Such algebras have index zero. More generally, algebras with index zero are called \emph{Frobenius}, and have been extensively studied in the context of invariant theory \textbf{\cite{Ooms}} and are of special interest in deformation theory and quantum group theory stemming from their connection to the classical Yang-Baxter equation.  See  \textbf{\cite{GG:Boundary}} and \textbf{\cite{GG:Frob}} for examples.

To fix the notation, let $\mathfrak{g}$ be a Lie algebra over $\mathbb{C}$.  For any functional $F \in \mathfrak{g}^*$ one can associate the skew-symmetric and bilinear \emph{Kirillov form} $B_F (x,y) = F[x,y]$.  The index of $\mathfrak{g}$ is defined to be the minimum dimension of the kernel of $B_{F}$ as $F$ ranges over $\mathfrak{g}^*$.  The Lie algebra $\mathfrak{g}$ is Frobenius if its index is zero.  Equivalently, $\mf{g}$ is Frobenius if there is a functional $F\in \mathfrak{g}^*$ such that $B_F$ is non-degenerate.  We call such an $F$ a \emph{Frobenius functional} and the natural map $\mathfrak{g} \rightarrow \mathfrak{g}^*$ defined by $x \mapsto  F[x,-]$ is an isomorphism.  The image of $F$ under the inverse of this map is called a \emph{principal element} of $\mathfrak{g}$ and will be denoted $\hat{F}$.  It is the unique element of $\mathfrak{g}$ such that 
$$
F\circ \mathrm{ad} \hat{F}= F([\hat{F},-]) = F.  
$$
If $\mathfrak{g}$ is Frobenius, its set of Frobenius functionals is, in general, quite large; forming an open subset of $\mathfrak{g}^{*}$ in the Zariski and Euclidean topologies (see \textbf{\cite{Ooms2}} and 
\textbf{\cite{DK}}).

As a consequence of Proposition 3.1 in \textbf{\cite{Ooms2}}, Ooms established that the spectrum of the adjoint of a principal element of a Frobenius Lie algebra is independent of the principal element chosen to compute it  (see also \textbf{\cite{G:Principal}}, Theorem 3).   Subsequently, Gerstenhaber and Giaquinto asserted, in particular (\textbf{\cite{G:Principal}}, Theorem 2), that if the algebra is a seaweed subalgebra of $\mathfrak{sl}(n)$ then the spectrum consists of an unbroken set of integers. However M. Dufflo, in a private communication to the authors, indicated that the proof of the unbroken property is incorrect. The primary goal of this article is to provide a complete and correct proof of the unbroken spectrum assertion, but we also show that the multiplicities of the spectrum's eigenvalues form a symmetric distribution (see Theorem \ref{GGConj}). The symmetry result is not an afterthought.  It is, in fact, critical to our proof of the unbroken spectrum result.  Our methods here are constructive and combinatorial in nature. 

\section{Seaweeds}

Let $\mf{p}$ and $\mf{p'}$ be two parabolic subalgebras of a reductive Lie algebra $\mf{g}$.  If $\mf{p} + \mf{p'}= \mf{g}$ then
$\mf{p}\cap\mf{p'}$ is called a \textit{seaweed}, or in the terminology of A. Joseph \textbf{\cite{Joseph}} \textit{biparabolic}, subalgebra of $\mf{g}$.  In what follows, we further assume that $\mf{g}$ is simple and comes equipped with a triangular decomposition 
\begin{eqnarray*} 
\mf{g}=\mf{u_+}\oplus\mf{h}\oplus\mf{u_-}
\end{eqnarray*}

\noindent
where $\mf{h}$ is a Cartan subalgebra of $\mf{g}$ and $\mf{u_+}$ and $\mf{u_-}$ are the subalgebras consisting of the upper and lower triangular matrices, respectively. Let $\Pi$ be the set of $\mf{g}$'s simple roots and for $\beta\in\Pi$,
let $\mf{g}_{\beta}$ denote the root space corresponding to $\beta$. A seaweed subalgebra $\mf{p}\cap\mf{p'}$ is called \textit{standard} if
$\mf{p}\supseteq \mf{h}\oplus\mf{u}_+$ and $\mf{p'}\supseteq \mf{h}\oplus\mf{u_-}$.
In the case that $\mf{p}\cap\mf{p'}$ is standard, let
$\Psi=\{\beta\in\Pi :\mf{g}_{-\beta}\notin \mf{p}\}$, 
$\Psi'=\{\beta\in\Pi :\mf{g}_{\beta}\notin \mf{p'}\}$,
and denote the seaweed by $\mf{p}(\Psi \dd \Psi')$.  

Let $\mf{sl}(n)$ be the algebra of $n\times n$ matrices with trace zero and consider the triangular decomposition of $\mf{sl}(n)$ as above.  Let $\Pi=\{\beta_1,\dots ,\beta_{n-1}\}$ be the set of simple roots of $\mf{sl}(n)$ with the standard ordering and let let $\mf{p}_n^\A(\Psi \dd \Psi')$ denote a seaweed subalgebra of $\mf{sl}(n)$ where $\Psi$ and $\Psi'$ are subsets of $\Pi$.  Let $C_n$ denote the set of sequences of positive integers whose sum is $n$ 
(i.e., $C_n$ is the set of compositions of $n$).
It will be convenient to index seaweeds of $\mf{sl}(n)$ by pairs of elements 
of $C_n$. Let $\mathcal{P}(X)$ denote the power set of a set $X$.
Let $\varphi_\A$ be the usual bijection from $C_n$ to a set of cardinality $n-1$.
That is, given $\ul{a}=(a_1,a_2,\dots ,a_m)\in C_n$, define
$\varphi_\A :C_n\rightarrow \mathcal{P}(\Pi)$ by
\[\varphi_\A(\ul{a})=\{\beta_{a_1},\beta_{a_1+a_2},\dots 
,\beta_{a_1+a_2+\dots +a_{m-1}}\}.\]

\noindent
Now, following the notational conventions established in \textbf{\cite{CGM11}} define the \textit{type} of the seaweed $\mf{p}_n^\A(\varphi_\A(\ul{a}) \dd \varphi_\A(\ul{b}))$ to be                                                                                                              the symbol 
\begin{eqnarray*}
\frac{a_{1}|a_{2}| \dots |a_{m}}{b_{1}|b_{2}| \dots |b_{t}},~~~ {\rm{where}}~~~\sum_{i=1}^m a_i = \sum_{j=1}^tb_j =n.
\end{eqnarray*}

By construction, the sequence of numbers in $\ul{a}$ determines the heights of triangles
below the main diagonal in $\mf{p}_n^\A(\varphi_\A(\ul{a}) \dd \varphi_\A(\ul{b}))$, which may have nonzero entries,
and the sequence of numbers in $\ul{b}$ determines the heights of triangles
above the main diagonal. 
For example, the seaweed 
$\mf{p}_6^\A(\{\beta_2\} \dd \{\beta_1,\beta_3\})$
of type $\frac{2|4}{1|2|3}$ has the following shape, where * indicates the possible nonzero entries. See 
Figure \ref{Aseaweed}.
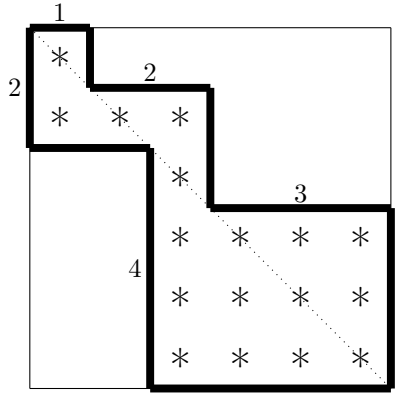
\begin{figure}[H]
\[\begin{tikzpicture}[scale=.8]
\draw (0,0) -- (0,6);
\draw (0,6) -- (6,6);
\draw (6,6) -- (6,0);
\draw (6,0) -- (0,0);
\draw [line width=3](0,6) -- (0,4);
\draw [line width=3](0,4) -- (2,4);
\draw [line width=3](2,4) -- (2,0);
\draw [line width=3](2,0) -- (6,0);

\draw [line width=3](0,6) -- (1,6);
\draw [line width=3](1,6) -- (1,5);
\draw [line width=3](1,5) -- (3,5);
\draw [line width=3](3,5) -- (3,3);
\draw [line width=3](3,3) -- (6,3);
\draw [line width=3](6,3) -- (6,0);

\draw [dotted] (0,6) -- (6,0);

\node at (.5,5.4) {{\LARGE *}};
\node at (.5,4.4) {{\LARGE *}};
\node at (1.5,4.4) {{\LARGE *}};
\node at (2.5,4.4) {{\LARGE *}};
\node at (2.5,3.4) {{\LARGE *}};
\node at (2.5,2.4) {{\LARGE *}};
\node at (3.5,2.4) {{\LARGE *}};
\node at (4.5,2.4) {{\LARGE *}};
\node at (5.5,2.4) {{\LARGE *}};
\node at (2.5,1.4) {{\LARGE *}};
\node at (3.5,1.4) {{\LARGE *}};
\node at (4.5,1.4) {{\LARGE *}};
\node at (5.5,1.4) {{\LARGE *}};
\node at (2.5,0.4) {{\LARGE *}};
\node at (3.5,0.4) {{\LARGE *}};
\node at (4.5,0.4) {{\LARGE *}};
\node at (5.5,0.4) {{\LARGE *}};

\node at (-.25,5) {2};
\node at (1.75,2) {4};
\node at (.5,6.25) {1};
\node at (2,5.25) {2};
\node at (4.5,3.25) {3};

\end{tikzpicture}
\]
\caption{
A seaweed of type $\frac{2|4}{1|2|3}$}
\label{Aseaweed}
\end{figure}

As noted in the introduction, a distinguished class of Lie algebras $\mathfrak{g}$ are those whose index is zero.  The \textit{index} of a Lie algebra $\mf{g}$ is an important invariant and may be regarded as a generalization of the Lie algebra's rank:  $\ind \mf{g}\le \rk  \mf{g}$, with equality when $\mf{g}$ is reductive.  More formally, the index of a Lie algebra $\mf{g}$ is given by

\[\ind \mf{g}=\min_{F\in \mf{g^*}} \dim  (\ker (B_F)),\]

\noindent where $B_F$ is the associated skew-symmetric \textit{Kirillov form} defined by $B_F(x,y)=F([x,y])$ for all $x,y\in\mf{g}$.  Such algebras are called \textit{Frobenius}.  We will be exclusively interested in Frobenius seaweed subalgebras of $\mathfrak{sl}(n)$ and from here on, we tacitly assume that all seaweeds are subalgebras of $\mathfrak{sl}(n)$.

\section{Meanders}

%To each seaweed of type $\frac{a_{1}|a_{2}|a_{3}| \dots |a_{m}}{b_{1}|b_{2}|b_{3}| \dots |b_{t}}$ we develop a planar graph as follows.
Following Dergechev and A. Kirillov \cite{DK}, we associate a planar graph 
to each seaweed of type 
$\frac{a_{1}|a_{2}| \dots |a_{m}}{b_{1}|b_{2}| \dots |b_{t}}$ 
as follows.
Line up $n$ vertices horizontally and label them $v_{1}, v_{2}, \dots, v_{n}$. Partition the set of vertices into two set partitions, called \emph{top} and \emph{bottom}. The top partition groups together the first $a_1$ vertices, then the next $a_2$ vertices, and so on, lastly grouping together the last $a_m$ vertices. In a similar way, the bottom partition is determined by the sequence $b_1,\dots ,b_t$. We call each set within a set partition a \textit{block}. For each block in the top (likewise bottom) partition we build up the graph by adding edges in the same way. First, add an edge from the first vertex of a block to the last vertex of the same block drawn concave down (respectively concave up in the bottom part case).  The edge addition is then repeated between the second vertex and the second to last and so on within each block of both partitions. 
More explicitly, given vertices $v_j,v_k$ in a top block of 
size $a_i$, there is an edge between them if and only if 
$j+k=2(a_1+a_2+\dots+a_{i-1})+a_i+1$. If $v_j,v_k$ are in a bottom block of 
size $b_i$, there is an edge between them if and only if 
$j+k=2(b_1+b_2+\dots+b_{i-1})+b_i+1$.
The resulting undirected planar graph 
%with top blocks of sizes $a_1, a_2,\ldots ,a_m$ and bottom blocks of sizes $b_1, b_2,\ldots ,b_t$ 
is called the \textit{meander} associated to the given seaweed. 
We say that the meander has the 
same type as its associated seaweed. See the the left panel of Figure 
\ref{fig:meanders}.

\begin{figure}[h]
\[\begin{tikzpicture}
\tikzstyle{every node}=[draw,circle,fill=white,minimum size=4pt,inner sep=0pt]
\tikzset{->-/.style={decoration={markings,
mark=at position #1 with {\arrow[scale=1.5,>=stealth]{>}}},postaction={decorate}}}
\draw (0,0) node[label=left:$v_1$] (0) {};
\draw (1,0) node[label=left:$v_2$] (1) {};
\draw (2,0) node[label=left:$v_3$] (2) {};
\draw (3,0) node[label=left:$v_4$] (3) {};
\draw (4,0) node[label=left:$v_5$] (4) {};
\draw (5,0) node[label=left:$v_6$] (5) {};

%\draw (1.5,0)--(1.5,.9);
%\draw (2.5,0)--(2.5,-.9);

\draw (1) to [bend right=50] (0);
\draw (5) to [bend right=50] (2);
\draw (4) to [bend right=50] (3);
\draw (1) to [bend right=50] (2);
\draw (3) to [bend right=50] (5);

;\end{tikzpicture}
\hspace{2cm}
\begin{tikzpicture}
\tikzstyle{every node}=[draw,circle,fill=white,minimum size=4pt,inner sep=0pt]
\tikzset{->-/.style={decoration={markings,
mark=at position #1 with {\arrow[scale=1.5,>=stealth]{>}}},postaction={decorate}}}
\draw (0,0) node[label=left:$v_1$] (0) {};
\draw (1,0) node[label=left:$v_2$] (1) {};
\draw (2,0) node[label=left:$v_3$] (2) {};
\draw (3,0) node[label=left:$v_4$] (3) {};
\draw (4,0) node[label=left:$v_5$] (4) {};
\draw (5,0) node[label=left:$v_6$] (5) {};

%\draw (1.5,0)--(1.5,.9);
%\draw (2.5,0)--(2.5,-.9);

\draw [->-=.5] (1) to [bend right=50] (0);
\draw [->-=.5] (5) to [bend right=50] (2);
\draw [->-=.5] (4) to [bend right=50] (3);
\draw [->-=.5] (1) to [bend right=50] (2);
\draw [->-=.5] (3) to [bend right=50] (5);

;\end{tikzpicture}
\]

\caption{A meander of type $\frac{2|4}{1|2|3}$}
\label{fig:meanders}
\end{figure}

One of the main results of \textbf{\cite{DK}} is that the index of a seaweed may be computed by counting the connected components and cycles of its associated meander. In particular, a seaweed is Frobenius precisely when its associated meander consists of single path.  In this case we also say that the meander is Frobenius.
%One of the main results of \textbf{\cite{DK}} is that the index of a 
%seaweed may be computed by counting the connected components and cycles 
%of its associated meander.
%A special case of this is the following theorem.
%\begin{theorem}[\cite{DK}]
%A seaweed is Frobenius if and only if its associated meander is a path.
%\end{theorem}
%In this paper we are only concerned with Frobenius seaweeds, and thus only 
%concerned with meanders that are a single path. In this case we also say 
%that the meander is Frobenius.

\section{Frobenius functionals}

To each seaweed, there is a canonically defined functional developed by Dergachev and A. Kirillov in \textbf{\cite{DK}}.  
To present the definition of this functional, we require some preliminary notation.  Let $e_{ij}^*$ denote the linear functional which selects the $(i,j)$ entry of a given $n \times n$ matrix.  If $S$ is a subset of the indices $(i, j)$ with $1 \leq i, j \leq n$, then $F_{S}$ denotes the functional $\sum_{s \in S} e^{*}_{s}$.  We say that $S$ \emph{supports} $F_{S}$ and the \emph{directed graph of the functional}, $\Gamma(S)$, has vertices $v_{1}, \dots , v_{n}$ with an arrow from $v_{i}$ to $v_{j}$ whenever $(i, j) \in S$.  

If we begin with a meander graph and orient the top edges with a counterclockwise orientation and the bottom edges with clockwise orientation, the resulting directed meander graph is the directed graph of the Dergachev-Kirillov functional.
%which we here denote by $F_{\dk}$.
%In this case we write $MS$ instead of $S$ to indicate that the set of indices are coupled with a meander. 
Henceforth, we shall assume that all meanders come equipped with this orientation,
unless otherwise stated. We let $MS$ denote the set of indices $(i,j)$ such that
there is a directed edge from $i$ to $j$ in $M$.
(See the right side of Figure 2 where the directed edges of $M$ are $\{(2,1),(6,3),(5,4),(2,3),(4,6)\}$ and $F_{MS}=e_{2,1}^*+e_{6,3}^*+e_{5,4}^*+e_{2,3}^*+e_{4,6}^*$.) The Dergachev-Kirillov functional is\textit{ regular} in the sense that $F_{MS}$ realizes the smallest possible dimension for the kernel of 
$F([y,-])$ where $F$ ranges over all linear functionals.  In particular, $F_{MS}$ is Frobenius precisely when $M$ is Frobenius.
%\red{REFERENCES}

\section{Principal Elements}\label{sec:principal}

If $\mathfrak{g}$ is a Frobenius Lie algebra, then for a Frobenius functional $F\in \mathfrak{g}^*$ the natural map $\mathfrak{g}\longrightarrow \mathfrak{g}^* $ given by $x \longmapsto F[x,-]$ is an isomorphism.  The inverse image of $F$ under this isomorphism is called a \textit{principal element} of $\mathfrak{g}$ (corresponding to $F$) and will be denoted $\hat{F}$.   It is the unique element of $\mathfrak{g}$ such that $F([\hat{F},-]) = F$. As noted in the introduction, the spectrum of the adjoint representation of $\hat{F}$ is independent of which principal element is used to compute it.

%(\textbf{\cite{Ooms2}}, see also \cite{G:Principal}).

%After introducing the notion of a principal element in $\textbf{\cite{G:Principal}}$, Gerstenhaber and Giaquinto additionally showed that the spectrum of a principal element of a Frobenius Lie algebra 
%$\mathfrak{g}\subset\mf{sl}(n)$ is an invariant of the algebra $\mf{g}$. 
%That is, the spectrum is independent of the what Frobenius functional, and corresponding principal element, is chosen to compute it. So, without ambiguity we may refer to this spectrum as that of $\mathfrak{g}$. 

In practice, given a Frobenius seaweed $\mf{g}\subset \mf{sl}(n)$, it is desirable to construct $\hat{F}$ and the spectrum of ad $\hat{F}$ directly from its directed meander graph $M$. Following Gerstenhaber and Giaquinto  we have the following construction (see \textbf{\cite{GG:Frob}} and {\textbf{\cite{G:Principal}}}).
Define the \textit{measure} of path from vertex $v_i$ to vertex $v_j$
in $M$ to be the number of forward edges minus the number of backward edges encountered
when moving along the path from $v_i$ to $v_j$.
Fix any vertex $v$ in $M$. Construct a diagonal matrix $D$
by setting the $i^{\text{th}}$ diagonal entry of $D$ equal to
the measure of the path from $v_i$ to $v$. 
Then $\hat{F}$ is obtained from $D$ by adding an appropriate multiple of the identity matrix, so that the resulting matrix has trace zero.  For all $e_{i,j}\in\mf{g}$ such that $i\neq j$, $e_{i,j}$
is an eigenvector of $\ad \hat{F}:\mf{g}\rightarrow \mf{g}$.
The corresponding eigenvalue
is the measure of the path from $v_i$ to $v_j$ in $M$. Additionally,
$\ad \hat{F}(h)=0$ for all $h\in\mf{h}$, so $n-1$ zeros must be added
to the list of eigenvalues.

\begin{example}\label{ex:principal}  Consider the meander $M$ of type $\frac{2|4}{1|2|3}$ shown in Figure \ref{fig:meander}.  Since $M$ is Frobenius, $F_{MS}=e_{2,1}^*+e_{6,3}^*+e_{5,4}^*+e_{2,3}^*+e_{4,6}^*$ is Frobenius.

\begin{figure}[h]
\[\begin{tikzpicture}
\tikzstyle{every node}=[draw,circle,fill=white,minimum size=4pt,inner sep=0pt]
\tikzset{->-/.style={decoration={markings,
mark=at position #1 with {\arrow[scale=1.5,>=stealth]{>}}},postaction={decorate}}}
\draw (0,0) node[label=left:$v_1$] (0) {};
\draw (1,0) node[label=left:$v_2$] (1) {};
\draw (2,0) node[label=left:$v_3$] (2) {};
\draw (3,0) node[label=left:$v_4$] (3) {};
\draw (4,0) node[label=left:$v_5$] (4) {};
\draw (5,0) node[label=left:$v_6$] (5) {};

%\draw (1.5,0)--(1.5,.9);
%\draw (2.5,0)--(2.5,-.9);

\draw [->-=.5] (1) to [bend right=50] (0);
\draw [->-=.5] (5) to [bend right=50] (2);
\draw [->-=.5] (4) to [bend right=50] (3);
\draw [->-=.5] (1) to [bend right=50] (2);
\draw [->-=.5] (3) to [bend right=50] (5);

;\end{tikzpicture}\]

\caption{The directed graph of the meander of type $\frac{2|4}{1|2|3}$}
\label{fig:meander}
\end{figure}

The principal element is computed as follows. Using $v=v_5$, we have $D=\diag(-3,-2,-3,-1,0,-2),$
and so
\[\hat{F}=D+\frac{11}{6}=\diag\left(-\frac{7}{6},-\frac{1}{6},-\frac{7}{6},
\frac{5}{6},\frac{11}{6},-\frac{1}{6}\right).\]

%\noindent
Now, to represent the eigenvalues and their corresponding eigenvectors, we make use of a $6 \times 6$ bookkeeping matrix wherein each bold entry is the eigenvalue of $e_{i,j}$, provided $i\neq j$. The bold zero in the $(i,i)^{\text{th}}$ entry represents the eigenvalue of $e_{i,i}-e_{i+1,i+1}$. See Figure \ref{fig:eigenvalues}.

\begin{figure}[H]
$$
\left[ \begin{array}{cccccc}
{\bf 0} & 0 & 0 & 0 & 0 & 0\\
{\bf 1} & {\bf 0} & {\bf 1} & 0 & 0 & 0\\
0 & 0 & {\bf 0} & 0 & 0 & 0\\
0 & 0 & {\bf 2} & {\bf 0} & {\bf -1} & {\bf 1}\\
0 & 0 & {\bf 3} & {\bf 1} & {\bf 0} & {\bf 2}\\
0 & 0 & {\bf 1} & {\bf -1} & {\bf -2} & 0
\end{array} \right].
$$
\caption{The eigenvalues of the seaweed of type $\frac{2|4}{1|2|3}$}
\label{fig:eigenvalues}
\end{figure}

\noindent
Thus, the eigenvalues are given by the multiset $\{-2,-1,-1,0,0,0,0,0,1,1,1,1,1,2,2,3\}$.  Note also for this example that if we let $d_i$ be the dimension of the eigenvalue $i$, then the sequence $\{d_i\}$ is both symmetric and unimodal about one half.
\end{example} 

\begin{remark}  It is not surprising that the eigenvalues in the above example are integers. If $\mf{g}$ is a Frobenius subalgebra of $\mf{sl}(n)$, and $F_S$ is a Frobenius functional such that as an undirected graph $\Gamma(S)$ is a tree, then the Gerstenhaber-Giaquinto construction of a principal element is valid. Therefore, whenever $\Gamma(S)$ is a tree, the spectrum consists only of integers.  
\end{remark}

Since the spectrum of the adjoint of the principal element
is an invariant of a Frobenius Lie algebra, we may unambiguously refer to it
as the spectrum of the algebra.   We are now in a position to state the main result of this paper which is the following theorem.

\begin{theorem}\label{GGConj}
For $n\geq 2$, the spectrum of a Frobenius seaweed subalgebra of $\mathfrak{sl}(n)$ consists of an unbroken set of integers centered at one half.  Moreover, the dimensions of the associated eigenspaces form a symmetric distribution about one half. 
\end{theorem}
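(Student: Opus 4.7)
I first encode the spectrum combinatorially. Let $M$ be the Frobenius meander of the seaweed, which by hypothesis is a single path. Choosing any fixed reference vertex $v$, let $\ell(v_i)$ denote the measure of the path from $v_i$ to $v$ in $M$. The Gerstenhaber--Giaquinto recipe recalled in Section \ref{sec:principal} identifies the eigenvalue of $\ad \hat{F}$ on the non-diagonal basis vector $e_{i,j}\in\mf{g}$ with the integer $\ell(v_i)-\ell(v_j)$, while the Cartan contributes $n-1$ copies of zero. Integrality of the spectrum is thus immediate, and it remains to control where these integers lie and with what multiplicities. My plan is to treat the two remaining assertions of Theorem \ref{GGConj} separately, proving the symmetry statement first so that it can serve as a structural scaffold for the more delicate unbrokenness argument.

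For the symmetry, I would construct an explicit involution $\Phi$ on the distinguished basis of $\mf{g}$ (matrix units together with the $n-1$ Cartan generators $e_{i,i}-e_{i+1,i+1}$) sending each basis vector with eigenvalue $k$ to one with eigenvalue $1-k$. That such a matching must exist is a general fact for Frobenius Lie algebras, visible from the Jacobi identity: using the defining property $F([\hat{F},z])=F(z)$ one checks that $B_F([\hat{F},x],y)+B_F(x,[\hat{F},y])=B_F(x,y)$, so for eigenvectors $(\lambda+\mu-1)B_F(x,y)=0$, and non-degeneracy of $B_F$ then pairs the $\lambda$- and $(1-\lambda)$-eigenspaces. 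The combinatorial upgrade amounts to reading off from $F_{MS}$ which basis pairs are dual under $B_F$, using the oriented meander edges to trace an explicit matching.

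Unbrokenness I would handle by induction on $n$, with base cases for small Frobenius seaweeds checked directly. For the inductive step I plan to apply a size-reducing move on Frobenius meanders, in the spirit of the operations used in \cite{DK}, that collapses a pair of nested blocks of $M$ to produce a smaller Frobenius meander $M'$. The inductive hypothesis gives that $\spec(M')$ is an unbroken integer set symmetric about $1/2$, and the remaining task is to show that the eigenvalues contributed by the matrix units eliminated in passing from $M$ to $M'$ attach to $\spec(M')$ as a contiguous block. The Cartan contribution $0$ always appears and, by the symmetry just proved, forces $1$ into the spectrum; each intermediate value up to the new extreme is then realized by tracking the level function $\ell$ along the specific edges altered by the reduction.

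The main obstacle is the careful bookkeeping in the induction step. There is no universal "remove one vertex" move on Frobenius meanders; the natural reductions alter block sizes and may shift $\ell$ non-trivially at many vertices simultaneously. Establishing that every integer between the extremes is produced by some matrix unit, and that no gap appears in passing from $\spec(M')$ to $\spec(M)$, is precisely where the symmetry result becomes indispensable: it reduces the verification to just the positive half of the spectrum and guarantees that contributions on the two sides of $1/2$ appear in matched pairs, so the spectrum cannot acquire isolated values.
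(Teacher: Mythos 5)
Your symmetry argument is correct, and it is genuinely different from the paper's: the paper proves a \emph{blockwise} symmetry statement (Lemma \ref{thm:symmetric}) by pairing measures inside each block of the meander, whereas your Kirillov-form computation ($(\lambda+\mu-1)B_F(x,y)=0$ plus non-degeneracy of $B_F$) gives symmetry of the global eigenspace dimensions for any Frobenius Lie algebra whose principal element acts semisimply (which holds here since the Dergachev--Kirillov principal element is diagonal). That cleanly proves the second assertion of Theorem \ref{GGConj}, and more generally. But the difference matters downstream: in the paper's unbrokenness induction, symmetry is invoked \emph{within a single block} (in the Rotation Expansion case, to force the shifted copy $\sleft{A\cup B}+1$ into the new top block). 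Your global version cannot substitute there, because global symmetry permits the reflected eigenvalues to be contributed by a different block entirely.

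The genuine gap is in the unbrokenness induction. Your inductive hypothesis --- $\spec(M')$ unbroken and symmetric --- is too weak to propagate through the reduction moves, and your framing (``the eigenvalues eliminated in passing from $M$ to $M'$ attach to $\spec(M')$ as a contiguous block'') rests on a false premise: under the winding moves (Lemmas \ref{thm:Contraction} and \ref{Expansion}), the surviving matrix units do \emph{not} keep their eigenvalues. Whole blocks of eigenvalues are negated and shifted, e.g.\ $\sleft{A'}=-\sleft{B}$ in equation \eqref{re1} and $\sigma(B'\leftarrow C')=\sigma(A\leftarrow B)+1$ in equation \eqref{re6}, so $\spec(M)$ is not ``$\spec(M')$ plus new values'' in any direct sense. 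This is exactly why the paper strengthens the inductive hypothesis twice: unbrokenness is tracked block-by-block (the sets $\sleft{A}$, $\sright{B}$), and an auxiliary family of endpoint-measure sets $\tleft{A}$, $\tright{B}$ must also be assumed unbroken to survive the Pure Expansion case (equation \eqref{pe5}). In addition, gluing the transformed pieces into one unbroken set requires quantitative control of the connector simple eigenvalue $\gamma\in\{1,2,3\}$ (Lemma \ref{lem:simple}); nothing in your sketch supplies this, and without it the individually unbroken pieces need not overlap or abut. Finally, your closing claim that symmetry ``guarantees the spectrum cannot acquire isolated values'' is a non sequitur: the set $\{-2,0,1,3\}$ is symmetric about one half and broken. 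Symmetry halves the verification but does not by itself exclude gaps; it must be applied blockwise and in combination with the strengthened hypotheses above.
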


\section{The proof of Theorem \ref{GGConj}}\label{sec:proof}
We address the two items in the theorem separately, dealing symmetry in Section 6.1 and the unbroken property in Section 6.2. As noted in the introduction, the symmetry result is essential to establish the unbroken property.

\subsection{Symmetry}

Given a Frobenius meander $M$ of type $\frac{a_{1}|a_{2}| \dots |a_{m}}{b_{1}|b_{2}| \dots |b_{t}}$, let $m(i,j)$ denote the measure of the 
path from $v_i$ to $v_j$. Given a top block of vertices $A$ of size $a_k$,
we say that the eigenvalues contributed by $A$ is the following multiset
\begin{equation}\label{top contribute}
\{m(i,j)\mid v_i,v_j\in A\text{ and }i>j\}\cup\{0^{\floor{\frac{a_k}{2}}}\},
\end{equation}
where $\{ q^{l} \}$ denotes a multiset consisting of $q$ repeated $l$ times.
Similarly, given a bottom block of vertices $B$ of size $b_k$,
we say that the eigenvalues contributed by $B$ is the following multiset
\begin{equation}\label{bottom contribute}
\{m(i,j)\mid v_i,v_j\in B\text{ and }i<j\}\cup\{0^{\floor{\frac{b_k}{2}}}\}.
\end{equation}
Note that $\sum \floor{\frac{a_k}{2}}+\floor{\frac{b_k}{2}}=n-1$,
since a block of size $a_k$ has precisely $\floor{\frac{a_k}{2}}$ edges,
and there are $n-1$ edges in $M$. It follows that
the eigenvalues contributed by each block is a multiset partition of all the
eigenvalues of the seaweed. The following lemma implies the symmetry
property of Theorem \ref{GGConj}.

%We will prove that the statement of Theorem \ref{GGConj} in fact holds for
%the multiset of eigenvalues contributed by each block.
%We begin by
%proving that the dimensions of the associated eigenspaces are symmetric
%about one half, and then we shall use Lemma \ref{Expansion} (Winding Up)
%to prove that the eigenvalues are an unbroken sequence of integers.

\begin{lemma}\label{thm:symmetric}
Let $A$ be any top or bottom block of a Frobenius seaweed of $\mf{sl}(n)$
with $n\geq 2$.
Given any integer $l$, let $d_l$ be the multiplicity of the 
eigenvalue $l$ in the multiset of eigenvalues contributed by $A$.
The sequence $(d_l)$ is symmetric about one half.
\end{lemma}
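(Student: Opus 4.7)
My plan is to encode the multiset of eigenvalues contributed by $A$ as a generating function
\[
f(t) = \sum_{\ell} d_\ell\, t^\ell
\]
and then verify the palindromic identity $f(t) = t\, f(1/t)$, which is equivalent to the desired symmetry $d_\ell = d_{1-\ell}$ about one half.

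Let $a$ be the size of $A$ and write $v_{s+1}, \dots, v_{s+a}$ for its vertices. Following the Gerstenhaber--Giaquinto description in Section \ref{sec:principal}, set $x_k = D_{s+k}$, the measure of the path in the meander from $v_{s+k}$ to the fixed reference vertex; every contributed eigenvalue is then either $x_i - x_j$ for an ordered pair of indices in $A$, or one of the $\lfloor a/2 \rfloor$ Cartan zeros. The key structural input is that for each $k \le \lfloor a/2 \rfloor$ the vertices $v_{s+k}$ and $v_{s+a+1-k}$ are joined by the $k^{\text{th}}$ arc of $A$, which is the unique path between them in the Frobenius meander. The Dergachev--Kirillov orientation forces this arc to have measure $+1$ when traversed from the larger-indexed to the smaller-indexed vertex for a top block, and from smaller to larger for a bottom block, so
\[
x_{a+1-k} = x_k + \epsilon, \qquad k = 1, \dots, \lfloor a/2 \rfloor,
\]
with $\epsilon = +1$ for top blocks and $\epsilon = -1$ for bottom blocks.

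Setting $b = \lfloor a/2 \rfloor$ and
\[
g(t) = \sum_{p,q=1}^{b} t^{x_p - x_q},
\]
we have $g(1/t) = g(t)$ by swapping $p$ and $q$. I would now split the pairs defining $f(t)$ according as both indices lie in the left half $\{1, \dots, b\}$, both in the right half $\{a-b+1, \dots, a\}$, or the pair straddles the midpoint. The mirror relation reindexes ``both-right'' pairs as ``both-left'' pairs (the two copies of $\epsilon$ cancel), and rewrites each straddling pair as a ``both-left'' pair carrying a factor of $t^{+1}$; a careful accounting (identical for top and bottom) shows that the $b$ diagonal $p = q$ contributions to $g(t)$ are exactly cancelled by the $b$ Cartan zeros. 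For the even case $a = 2b$ this collapses to
\[
f(t) = (1+t)\, g(t),
\]
and since $t(1 + 1/t) = 1 + t$ together with $g(1/t) = g(t)$, the identity $t\, f(1/t) = f(t)$ follows immediately. In the odd case $a = 2b+1$, an unpaired middle vertex $v_{s+b+1}$ contributes an additional term built from $h(t) = \sum_{j=1}^{b} t^{x_{b+1} - x_j}$ which is patently invariant under $F(t) \mapsto t\, F(1/t)$, so the palindromic identity still holds.

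The main obstacle is purely bookkeeping: confirming that $\epsilon$ cancels out of the ``both-right'' sum uniformly across top and bottom blocks, that the number of Cartan zeros $\lfloor a/2 \rfloor$ exactly matches the $b$ diagonal terms of $g(t)$, and that the odd-case correction from the middle vertex is invariant under the palindromic involution. Once the factorization $f(t) = (1+t)\, g(t) + (\text{middle correction})$ is assembled, the symmetry of $(d_\ell)$ about one half drops out at once.
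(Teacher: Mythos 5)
Your proposal is correct, and it is essentially the paper's own argument recast in generating-function language: your mirror relation $x_{a+1-k} = x_k + \epsilon$ is exactly the paper's key observation that the arc joining mirrored vertices has measure $1$, and the factorization $f(t) = (1+t)\,g(t)$ (plus the invariant middle-vertex term in the odd case) encodes precisely the paper's explicit pairing of each eigenvalue $m(i,j)$ with $m(j,\,a_k+2s-1-i)$ so that every pair sums to $1$, with the arc eigenvalues matched against the $\lfloor a_k/2\rfloor$ Cartan zeros. The bookkeeping you defer does close, and when unwound it reproduces the paper's pairing exactly, so the two proofs differ only in presentation.
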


\begin{proof}  Let $A=\{v_s,v_{s+1},\dots ,v_{s+a_k-1}\}$ be a top block of size $a_k$. The argument when $A$ is a bottom
block is similar and will be omitted. 
We will partition the eigenvalues contributed by $A$ into pairs, such that
the mean of each pair is one half.

For each pair of vertices $v_i,v_j\in A$ such that $i>j$ and 
$i+j=a_k+2s-1$, pair the eigenvalue $m(i,j)$ with
one of the $\floor{\frac{a_k}{2}}$ zeros from \eqref{top contribute}
that is not a measure of a path. By the definition of the meander $M$
we have $m(i,j)=1$, thus the mean of each pair is one half. 

For each pair of vertices $v_i,v_j\in A$ such that $i>j$ and 
$i+j>a_k+2s-1$, consider the
pair of eigenvalues $\{m(i,j),m(j,a_k+2s-1-i)\}$. Note that 
$j>a_k+2s-1-i\geq s$, thus $m(j,a_k+2s-1-i)$ is indeed an eigenvalue
contributed by $A$. Also note that
\[j+(a_k+2s-1-i)=a_k+2s-1+(j-i)\leq a_k+2s-2.\]
This means that each eigenvalue $m(i,j)$ such that $i+j>a_k+2s-1$ 
can be paried with a unique eigenvalue $m(j,a_k+2s-1-i)$ such that 
$j+(a_k+2s-1-i)\leq a_k+2s-2$. Lastly, note that
\[m(i,j)+m(j,a_k+2s-1-i)=m(i,a_k+2s-1-i)=1,\]
by the definition of $M$. Therefore, the mean
of each pair is again one half.
\end{proof}

\subsection{Unbroken}

Every Frobenius meander can be constructed by a unique sequence of graph theoretic moves, which we call Winding-up moves (see Lemma \ref{thm:Contraction}).  These moves are 
%essentially a graph theoretic rendering of 
the inverse of Panyushev's 
%inductive
formula for the index of a seaweed \textbf{\cite{Panyushev}}. We will prove the unbroken property by induction on the number of Winding-up moves.

Our inductive hypothesis will be that the set of eigenvalues within each block
(see $\sleft{A}$ and $\sright{A}$ from equations \eqref{eq:sleft} and \eqref{eq:sright} below)
of a Frboenius meander is in fact an unbroken set. We will also need an 
additional inductive hypothesis that a certain subset of the eigenvalues
within each block is an unbroken set (see $\tleft{A}$
and $\tright{A}$ from equations \eqref{eq:tleft} and \eqref{eq:tright}
below). In the inductive step of the proof, we rely on the properties
of what we call simple eigenvalues, which we define next.

\begin{definition}
If $v_i$ and $v_{i+1}$ are the same top (respectively bottom) block of
a Frobenius meander,
we call $m(i+1,i)$ (respectively $m(i,i+1)$) a \textit{simple eigenvalue}.
\end{definition}

The simple eigenvalues are so named since they are the eigenvalues
(of the adjoint action of the principal element) of
the simple root vectors of $\mf{sl}(n)$ contained in the seaweed.
The following lemma details the properties of simple eigenvalues that will be used in the proof of the unbroken spectrum property.

\begin{lemma}\label{prop:simple}
Let $M$ be a Frobenius meander.  We have the following items.

\be

\item\label{prop1} Every eigenvalue $m(k,j)$ of $M$ is a sum 
of simple eigenvalues. Namely $m(k,j)=\sum_{i=k}^{j-1}m(i,i+1)$ if $k<j$,
or $m(k,j)=\sum_{i=j}^{k-1}m(i+1,i)$ if $j<k$.

\item\label{prop2} If there is an edge from $v_{i+1}$ to $v_{i}$, then $m(i+1,i)=1$.
If there is an edge from $v_i$ to $v_{i+1}$, then $m(i,i+1)=1$.
This can only happen at the center of a block of even size.

\item\label{prop3} If there is an edge from $v_{i+2}$ to $v_{i}$, 
then $m(i+2,i+1)+m(i+1,i)=1$.
If there is an edge from $v_{i}$ to $v_{i+2}$, 
then $m(i,i+1)+m(i+1,i+2)=1$.
This can only happen at the center of a block of odd size.

\item\label{prop4} If $v_i,v_{i+1}$ are in the same top block, and in the same bottom
block, then $m(i+1,i)=-m(i,i+1)$.

\item\label{prop5} If there is an edge from $v_{i+j}$ to $v_{i}$ where
$j\geq 3$, then $m(i+j,i+j-1)=-m(i+1,i)$.
If there is an edge from $v_{i}$ to $v_{i+j}$ where
$j\geq 3$, then $m(i,i+1)=-m(i+j-1,i+j)$.

\ee
\end{lemma}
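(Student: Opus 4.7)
My plan is to reduce everything to two primitive facts about the measure function: additivity, $m(a,c) = m(a,b) + m(b,c)$, and antisymmetry, $m(a,b) = -m(b,a)$. Both are immediate from the construction preceding the lemma that realizes $\hat{F}$ as a diagonal matrix whose entries $D_i$ satisfy $m(i,j) = D_i - D_j$; equivalently, since a Frobenius meander $M$ is a tree, any two signed walks with the same endpoints record the same measure because all extra traversals cancel in pairs.

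Granting this, item \ref{prop1} is pure telescoping: for $k<j$,
\[ m(k,j) = D_k - D_j = \sum_{i=k}^{j-1}(D_i - D_{i+1}) = \sum_{i=k}^{j-1} m(i,i+1), \]
and the $j<k$ case is symmetric. To legitimately read each summand as a simple eigenvalue (or its negative), I first need to observe that in a Frobenius meander every consecutive pair $v_i, v_{i+1}$ must share either a top or a bottom block; otherwise the seaweed decomposes as a direct sum at position $i$ and $M$ splits into two components, contradicting the hypothesis that $M$ is a single path. This structural input is the one non-routine ingredient, and I expect it to be the main obstacle.

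Items \ref{prop2} and \ref{prop3} come directly from the explicit edge rule for meanders. A top edge joins $v_j, v_k$ exactly when $j+k = 2(a_1 + \dots + a_{\ell-1}) + a_\ell + 1$ for the block of size $a_\ell$ containing them. An edge $v_{i+1} \to v_i$ forces the block size $a_\ell$ to be even with the edge straddling its midpoint, and a single forward edge trivially has measure $1$. An edge $v_{i+2} \to v_i$ forces $a_\ell$ to be odd with $v_{i+1}$ at the center; applying item \ref{prop1} to the one-edge path from $v_{i+2}$ to $v_i$ gives $m(i+2,i+1) + m(i+1,i) = m(i+2,i) = 1$. The bottom-block counterparts are identical after reversing orientation.

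Item \ref{prop4} is a direct application of antisymmetry; the two-block hypothesis is only there to guarantee that both $m(i+1,i)$ and $m(i,i+1)$ really are simple eigenvalues, corresponding to two distinct root vectors $e_{i+1,i}, e_{i,i+1}$ in the seaweed. For item \ref{prop5}, the key observation is that if $v_{i+j}$ and $v_i$ are mirror-symmetric within a top block, then so are $v_{i+j-1}$ and $v_{i+1}$; when $j \geq 3$ these are two distinct vertices, so the top edge $v_{i+j-1} \to v_{i+1}$ also lies in $M$. Additivity then gives
\[ m(i+j, i+j-1) = m(i+j,i) + m(i,i+1) + m(i+1, i+j-1) = 1 + m(i,i+1) + (-1) = -m(i+1,i), \]
and the bottom-edge statement is handled in exactly the same way.
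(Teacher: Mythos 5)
Your proof is correct and follows essentially the same route as the paper: the paper dismisses items (1)--(4) as obvious (your telescoping, edge-rule, and antisymmetry arguments are exactly the routine verifications being elided), and its proof of item (5) uses precisely your key observation that for $j \geq 3$ the mirror symmetry of the block forces a second edge from $v_{i+j-1}$ to $v_{i+1}$, whose measure $1$ is then subtracted from that of the outer edge. The only addition on your side is the explicit check that consecutive vertices always share a block (else the meander would disconnect), a point the paper leaves implicit.
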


\begin{proof}
Items \ref{prop1}, \ref{prop2}, \ref{prop3}, and \ref{prop4} are obvious. 
For Item \ref{prop5} note that if there is an edge from
$v_{i+j}$ to $v_{i}$, then
\[1=\sum_{k=0}^{j-1}m(i+k+1,i+k).\]
Since $j\geq 3$ there is also an edge from $v_{i+j-1}$ to $v_{i+1}$,
it follows that
\[1=\sum_{k=1}^{j-2}m(i+k+1,i+k).\]
A similar argument holds when there is an edge from $v_{i}$ to $v_{i+j}$.
\end{proof}

All of the simple eigenvalues can be efficiently computed as follows.
Start from the center of a block of even size and apply Item \ref{prop2} 
(if $M$
is Frobenius and $n\geq 2$, then there is at least one block of even size).
Next apply Item \ref{prop4}, and then apply either 
Item \ref{prop3} or \ref{prop5}. Repeat the process of
applying Items \ref{prop4} followed by Item \ref{prop3} or \ref{prop5}, 
until Item \ref{prop4} cannot be applied.
Repeat this procedure for each block of even size.
Since $M$ is a Frobenius meander (i.e. a single path), every simple eigenvalue
can be determined from this process.

In Figure \ref{fig:simple} below, simple eigenvalues $m(i+1,i)$ are placed
between $v_i,v_{i+1}$ above the horizontal line of the vertices,
and simple eigenvalues $m(i,i+1)$ are placed below. (The dotted horizontal
line is a visual aid to distinguish between simple eigenvalues - it is not an edge.)
\begin{figure}[H]
\[\begin{tikzpicture}[scale=1]
\tikzstyle{every node}=[draw,circle,fill=white,minimum size=4pt,inner sep=0pt]
\tikzset{->-/.style={decoration={markings,
mark=at position #1 with {\arrow[scale=1.5,>=stealth]{>}}},postaction={decorate}}}
\node[draw=none] at (4.5,-\sspace) {1};
\node[draw=none] at (4.5,\sspace) {-1};
\node[draw=none] at (7.5,\sspace) {1};
\node[draw=none] at (7.5,-\sspace) {-1};
\node[draw=none] at (1.5,-\sspace) {1};
\node[draw=none] at (1.5,\sspace) {-1};
\node[draw=none] at (-.5,\sspace) {1};
\node[draw=none] at (-.5,-\sspace) {-1};
\node[draw=none] at (9.5,-\sspace) {1};
\node[draw=none] at (9.5,\sspace) {-1};
\node[draw=none] at (10.5,\sspace) {2};
\node[draw=none] at (10.5,-\sspace) {-2};
\node[draw=none] at (-1.5,-\sspace) {2};
\node[draw=none] at (-1.5,\sspace) {-2};
\node[draw=none] at (2.5,\sspace) {2};
\node[draw=none] at (2.5,-\sspace) {-2};
\node[draw=none] at (6.5,-\sspace) {2};
\node[draw=none] at (6.5,\sspace) {-2};
\node[draw=none] at (5.5,\sspace) {3};
\node[draw=none] at (5.5,-\sspace) {-3};
\node[draw=none] at (3.5,-\sspace) {3};
\node[draw=none] at (.5,\sspace) {1};
\node[draw=none] at (.5,-\sspace) {-1};
\node[draw=none] at (8.5,-\sspace) {1};

\draw (-2,0) node (-2) {};
\draw (-1,0) node (-1) {};
\draw (0,0) node (0) {};
\draw (1,0) node (1) {};
\draw (2,0) node (2) {};
\draw (3,0) node (3) {};
\draw (4,0) node (4) {};
\draw (5,0) node (5) {};
\draw (6,0) node (6) {};
\draw (7,0) node (7) {};
\draw (8,0) node (8) {};
\draw (9,0) node (9) {};
\draw (10,0) node (10) {};
\draw (11,0) node (11) {};

\draw[dotted] (-2.5,0) -- (11.5,0);
\draw [->-=.5] (3) to [bend right=50] (-2);
\draw [->-=.5] (2) to [bend right=50] (-1);
\draw [->-=.5] (1) to [bend right=80] (0);
\draw [->-=.5] (8) to [bend right=50] (4);
\draw [->-=.5] (7) to [bend right=50] (5);
\draw [->-=.5] (-2) to [bend right=50] (11);
\draw [->-=.5] (-1) to [bend right=50] (10);
\draw [->-=.5] (0) to [bend right=50] (9);
\draw [->-=.5] (1) to [bend right=50] (8);
\draw [->-=.5] (2) to [bend right=50] (7);
\draw [->-=.5] (3) to [bend right=50] (6);
\draw [->-=.5] (4) to [bend right=80] (5);
\draw [->-=.5] (11) to [bend right=50] (9);
;\end{tikzpicture}
\]

\caption{Simple eigenvalues for the meander $\dfrac{6|5|3}{14}$}
\label{fig:simple}
\end{figure}

\begin{lemma}\label{lem:simple}
The absolute value of every simple eigenvalue is either 1, 2, or 3.
\end{lemma}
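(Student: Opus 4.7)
My plan is to follow the explicit algorithm for computing simple eigenvalues sketched in the paragraph immediately preceding the lemma. That algorithm starts from a simple eigenvalue of absolute value one, provided by Item \ref{prop2} at the center of any even-sized block (which exists in any Frobenius meander with $n\ge 2$), and determines every remaining simple eigenvalue by propagating via Items \ref{prop4}, \ref{prop3}, and \ref{prop5}. Since the Frobenius meander is a tree with $n-1$ edges, this propagation invokes each edge-constraint exactly once.

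I would then track how each item affects the magnitude of a simple eigenvalue. Item \ref{prop4} is the tautology $m(i+1,i)=-m(i,i+1)$ and preserves magnitude. Item \ref{prop5} sends $x\mapsto -x$ and so also preserves magnitude. Item \ref{prop3} sends $x\mapsto 1-x$; since $\bigl||1-x|-|x|\bigr|\le 1$, it changes magnitude by at most one. Consequently, the magnitude of a simple eigenvalue can grow only through applications of Item \ref{prop3}, and each such application contributes at most one unit of increase, so it suffices to bound the number of Item \ref{prop3} invocations by two.

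Each application of Item \ref{prop3} corresponds to a length-two edge of $M$, and such edges are in bijection with the odd blocks of $M$ of size at least three: each such block contributes its unique innermost edge (which has length two), and conversely any length-two edge from $v_i$ to $v_{i+2}$ forces $v_{i+1}$ to be the center of such a block. To count these blocks, I would let $T_0$ denote the set of vertices with no top edge -- namely top singletons together with centers of odd top blocks of size $\ge 3$ -- and define $B_0$ analogously on the bottom. The Frobenius condition, that $M$ is a single path, forces no degree-zero vertices ($T_0\cap B_0=\emptyset$) and exactly two degree-one vertices ($|T_0\triangle B_0|=2$), which together give $|T_0|+|B_0|=2$. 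This bounds the total number of odd blocks of size $\ge 3$ by two. Starting from magnitude one and applying Item \ref{prop3} at most twice, no simple eigenvalue can exceed magnitude three.

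The main obstacle is the combinatorial bound in the last step, counting odd blocks via the degree analysis of the path; the rest of the argument is essentially bookkeeping with Items \ref{prop2}--\ref{prop5}.
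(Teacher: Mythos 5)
Your overall strategy coincides with the paper's: run the propagation algorithm built from Lemma \ref{prop:simple}, note that Items \ref{prop4} and \ref{prop5} preserve magnitude so that only Item \ref{prop3} can change it, and bound the number of Item \ref{prop3} applications by two via the count of odd blocks. Your degree-counting justification of that count ($T_0\cap B_0=\emptyset$ because a path has no isolated vertices, $|T_0\triangle B_0|=2$ because it has exactly two endpoints) is in fact a welcome addition, since the paper simply asserts that a Frobenius meander has exactly two blocks of odd size without argument.

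There is, however, a genuine gap. The lemma asserts that the absolute value of every simple eigenvalue lies in $\{1,2,3\}$; in particular, no simple eigenvalue is zero. Your coarse estimate $\bigl||1-x|-|x|\bigr|\le 1$ for Item \ref{prop3} yields only the upper bound $|m|\le 3$ and leaves open the possibility that an application of Item \ref{prop3} \emph{decreases} the magnitude --- for instance, sends an input of $1$ to an output of $0$. The paper closes this by tracking signs rather than just magnitudes: in the algorithm the moves strictly alternate (Item \ref{prop4}, then Item \ref{prop3} or \ref{prop5}, then Item \ref{prop4}, and so on); every value produced by Items \ref{prop2}, \ref{prop3}, or \ref{prop5} is positive, and Item \ref{prop4} then negates it, so the input to every application of Item \ref{prop3} is negative. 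Hence $x\mapsto 1-x=1+|x|$ increases the magnitude by exactly one, the two possible Item \ref{prop3} applications are pinned down as $-1\mapsto 2$ and $-2\mapsto 3$, and every simple eigenvalue lies in $\{\pm 1,\pm 2,\pm 3\}$, never $0$. Your argument becomes complete (and sharper) once you add this sign-alternation observation; without it, what you have proved is the weaker statement that simple eigenvalues have absolute value at most $3$.
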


\begin{proof}
When using Lemma \ref{prop:simple} to determine the simple eigenvalues,
the first simple eigenvalue is positive. Item \ref{prop4} is then applied,
and the next simple eigenvalue is negative.
One then uses Item \ref{prop3} or \ref{prop5} to obtain the next 
simple eigenvalue, which must be 
positive. Since this process is repeated, the pair of simple eigenvalues
involved in using Item \ref{prop3} the first time is -1 and 2. 
If Item \ref{prop3}
is applied a second time before starting from the center 
of another block of even size,
then the pair of simple eigenvalues involved is -2 and 3.

Since the meander is Frobenius with $n\geq 2$, there are exactly two blocks
of odd size. This means Item \ref{prop3} can only be applied at most two times, 
and subsequent applications of Items \ref{prop4} and \ref{prop5} 
do not change the absolute value of the simple eigenvalues.
\end{proof}

The following Lemma\footnote{A superset of these Winding-down moves was first introduced in \textbf{\cite{CMW}} where the \textit{Signature of a meander} was introduced and used to develop index formulas for Type-A seaweeds (see also \textbf{\cite{CHMW})}.} is a special case of the well-known formula of Panyushev (see \textbf{\cite{Panyushev}}, Theorem 4.2).  We find that any Frobenius meander can be contracted to the empty meander through a deterministic sequence of graph-theoretic moves, each of which is uniquely determined by the structure of the meander at the time of move application.

\begin{lemma}[Winding-down]\label{thm:Contraction}
Let $M$ be a Frobenius meander of $\mf{sl}(n)$ with $n\geq 2$, 
and let $M$ have type
$
\frac{a_{1}|a_{2}| \dots |a_{m}}{b_{1}|b_{2}| \dots |b_{t}}.
$
We have the following moves:
%obtained by apply
%Without loss of generality, assume that $a_1>b_1$ and obtain a smaller
%Frobenius meander by applying one of the following moves.
\be
%\item {\bf Flip ($F$)} If $a_{1} < b_{1}$, then flip the meander vertically to obtain
%$$
%M \equiv \frac{b_{1}|b_{2}| \dots |b_{t}}{a_{1}|a_{2}| \dots |a_{m}}.
%$$
\item {\bf Block Elimination:} If $a_{1} = 2b_{1}$, then
$\displaystyle
M \mapsto \frac{b_{1}|a_{2}|a_{3}| \dots |a_{m}}{b_{2}|b_{3}| \dots |b_{t}}.
$
\item {\bf Rotation Contraction:} If $b_{1} < a_{1} < 2b_{1}$, then
$\displaystyle
M \mapsto \frac{b_{1}|a_{2}|a_{3}| \dots |a_{m}}{(2b_{1} - a_{1})|b_{2}|b_{3}| \dots |b_{t}}.
$
\item {\bf Pure Contraction:} If $a_1>2b_1$, then
$\displaystyle
M \mapsto \frac{a_{1} - 2b_{1}|b_{1}|a_{2}|a_{3}| \dots |a_{m}}{b_{2}|b_{3}| \dots |b_{t}}.
$
\item {\bf Flip-Down:} If $a_1<b_1$, then 
$\displaystyle
M \mapsto \frac{b_{1}|b_{2}|b_3|\dots|b_t}{{a_{1}|a_{2}|a_3|\dots|a_m}}.  
$

%$$\item {\bf Block Elimination} If $a_{1} = 2b_{1}$, then
%$$
%M \mapsto \frac{b_{1}|a_{2}|a_{3}| \dots |a_{m}}{b_{2}|b_{3}| \dots |b_{t}}.
%$$
%\item {\bf Rotation Contraction} If $b_{1} < a_{1} < 2b_{1}$, then
%$$
%M \mapsto \frac{b_{1}|a_{2}|a_{3}| \dots |a_{m}}{(2b_{1} - a_{1})|b_{2}|b_{3}| \dots |b_{t}}.
%$$
%\item {\bf Pure Contraction} If $a_1>2b_1$, then
%$$
%M \mapsto \frac{a_{1} - 2b_{1}|b_{1}|a_{2}|a_{3}| \dots |a_{m}}{b_{2}|b_{3}| \dots |b_{t}}.
%$$
\ee
\end{lemma}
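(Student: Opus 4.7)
The plan is to reduce the lemma to a direct combinatorial verification in each of four mutually exclusive cases, invoking Panyushev's theorem (\textbf{\cite{Panyushev}}, Theorem 4.2) to guarantee that the index---and hence the Frobenius property---is preserved by each move. Since the moves are quoted as a special case of Panyushev's formula, the genuine work is (i) to show the four case hypotheses exhaust all possibilities for a Frobenius meander with $n\geq 2$, and (ii) to verify that in each case the combinatorial transformation on type symbols matches the reduction on the underlying meander.

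For (i), the only relation between $a_1$ and $b_1$ not covered by the four hypotheses is $a_1=b_1$, and I would show this cannot occur. If $a_1=b_1$, the first top block and first bottom block cover the same vertex set $\{v_1,\ldots,v_{a_1}\}$, and the rule $i+k=a_1+1$ places edges on the same vertex pairs in both blocks. When $a_1=1$ the vertex $v_1$ is isolated, while for $a_1\geq 2$ each pair $(v_i,v_{a_1+1-i})$ carries both a top and a bottom edge and forms a 2-cycle; both outcomes contradict the Frobenius (single-path) hypothesis. Hence $a_1\neq b_1$, and the four listed cases are exhaustive.

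For (ii), I would treat the cases one by one. Flip-Down is the obvious top-bottom reflection, corresponding to the interchange of the two parabolics defining $\mf{p}\cap\mf{p'}$. For Block Elimination ($a_1=2b_1$), the top edges of the first block induce a perfect matching between $\{v_1,\ldots,v_{b_1}\}$ and $\{v_{b_1+1},\ldots,v_{2b_1}\}$, while the first bottom block lives entirely inside $\{v_1,\ldots,v_{b_1}\}$; deleting these first $b_1$ vertices and relabeling $v_{b_1+j}\mapsto v_j'$ produces a meander whose new first top block has size $b_1$ and whose subsequent block data is unchanged. Pure Contraction ($a_1>2b_1$) is analogous, with the deletion of $\{v_1,\ldots,v_{b_1}\}$ splitting the first top block into a leftmost stub of size $a_1-2b_1$ and a reflected block of size $b_1$. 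Rotation Contraction ($b_1<a_1<2b_1$) is the most delicate: the first top block reflects $\{v_1,\ldots,v_{a_1-b_1}\}$ onto $\{v_{b_1+1},\ldots,v_{a_1}\}$, so after deleting the first $b_1$ vertices and applying the relabeling, the restriction of the top edge rule $j+k=a_1+1$ to the surviving reflected tail becomes $j+k=(2b_1-a_1)+1$, which is precisely the edge rule for a new \emph{bottom} block of size $2b_1-a_1$, while the remaining $b_1$ untouched upper vertices form the new first top block.

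The main obstacle is the Rotation Contraction case, where a piece of the old top block transmutes into the new bottom block. I would sanity-check this on a small example---such as $\frac{3}{2|\cdots}$, where $a_1=3$, $b_1=2$, $2b_1-a_1=1$---before committing to the general index substitution above, and then appeal to Panyushev's formula to conclude that the resulting meander is again Frobenius. Iteration terminates at the empty meander because Flip-Down cannot be applied twice in a row (it strictly reverses the sign of $a_1-b_1$) and each of the other three moves strictly decreases $n$, so only finitely many moves are possible before the meander is exhausted.
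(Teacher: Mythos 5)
The paper itself never proves this lemma: it is stated as a special case of Panyushev's Theorem 4.2 (see \textbf{\cite{Panyushev}}), with the moves credited to \textbf{\cite{CMW}}, plus the one-line remark that $a_1=b_1$ is excluded because a Frobenius meander is homotopically trivial. So your plan to give a self-contained combinatorial verification is a genuinely different and more ambitious route, and parts of it are sound: your exhaustiveness argument (an isolated vertex when $a_1=b_1=1$, doubled edges forming $2$-cycles when $a_1=b_1\geq 2$, either contradicting the single-path hypothesis when $n\geq 2$) is correct, as is your termination argument (Flip-Down reverses the sign of $a_1-b_1$ so cannot repeat, and the other three moves strictly decrease $n$).

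The gap is in your step (ii), which is where the real content lies. First, deletion alone does not produce the new meander. In Block Elimination every top edge of the first block joins a deleted vertex to a surviving one, so after deleting $\{v_1,\dots,v_{b_1}\}$ the induced graph on $\{v'_1,\dots,v'_{b_1}\}$ has no edges at all; the new top edges $v'_j\sim v'_{b_1+1-j}$ arise only by contracting the length-three paths $v'_j - v_{b_1+1-j} - v_j - v'_{b_1+1-j}$ through the deleted set, and it is exactly this path-contraction that shows $M$ is a single path if and only if $M'$ is. Without it you have not shown the moves preserve the Frobenius property at all; your fallback appeal to Panyushev concedes this, but then (ii) should instead consist of matching your moves against Panyushev's inductive formulas, which you never state. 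Second, Rotation Contraction --- the case you yourself flag as delicate --- is wrong as written: the target type has $n-(a_1-b_1)$ vertices, so the deleted set must be $\{v_1,\dots,v_{a_1-b_1}\}$, not the first $b_1$ vertices. Indeed, deleting $b_1$ vertices removes an endpoint of every top edge of the first block (each such edge has smaller endpoint at most $\lfloor a_1/2\rfloor<b_1$ since $a_1<2b_1$), so there is no ``surviving reflected tail.'' Even with the correct deletion set, the required identification must reflect the middle segment, $v_i\mapsto w_{b_1+1-i}$ for $a_1-b_1<i\leq b_1$, while shifting $v_i\mapsto w_{i-(a_1-b_1)}$ for $i>b_1$; only then do the surviving edges of the old first bottom block, together with the contracted paths through the deleted vertices, supply the edges of the new top block of size $b_1$ --- a block your description leaves entirely edgeless, since you account only for old top edges becoming the new bottom block.
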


%\bigskip
Note that we need not consider the case $a_1=b_1$ since we are presuming that the meander is Frobenius and so homotopically trivial.  The following result is immediate by simply reversing each move in 
Lemma~\ref{thm:Contraction}.

\begin{lemma}[Winding-up]\label{Expansion}
Every Frobenius meander with at 
least 2 vertices is the result of a 
sequence of the following moves applied to the one vertex meander.
Given a meander 
$M$ of type $\frac{a_{1}|a_{2}| \dots |a_{m}}{b_{1}|b_{2}| \dots |b_{t}}$, 
create a meander $M'$ by one of the following moves:

\be
%\item {\bf Flip ($\tilde{F}$)} Exchange the list of $a_{j}$ for the list of $b_{j}$ (namely flip the entire meander vertically).
\item {\bf Block Creation:}
%To the left of the block $B_{1}$ on the bottom, add a new block of size $a_{1}$ and replace the block $A_{1}$ with a block of size $2a_{1}$.  This creates
$M\mapsto M'$ of type $\displaystyle
\frac{2a_{1}|a_{2}| \dots |a_{m}}{a_{1}|b_{1}|b_{2}| \dots |b_{t}}.$
\item {\bf Rotation Expansion:}
%Replace the block $B_{1}$ on the bottom with a block of size $a_{1}$ and replace the block $A_{1}$ on the top with a block of size $b_{1} + 2(a_{1} - b_{1})$.  This creates $M_{i + 1}$ of type 
$M\mapsto M'$ of type $\displaystyle
\frac{(2a_{1} - b_{1})|a_{2}| \dots |a_{m}}{a_{1}|b_{2}| \dots |b_{t}},$
provided that $a_{1} > b_{1}$.
\item {\bf Pure Expansion:}
%Expand the second top block around the first top block creating a new top block of size $a_1+2a_2$, and creating a new bottom block of size $a_2$ to the left of the bottom block $B_1$.
%This move can only be made if $b_{1} \geq a_{1} + a_{2} + \dots + a_{p + 1}$.  To the left of the block $B_{1}$ on the bottom, add new blocks of size $a_{p + 1}, a_{p}, \dots a_{2}$ and replace $a_{1}|a_{2}|\dots|a_{p + 1}$ with a single block of size $a_{1} + 2(a_{2} + a_{3} + \dots + a_{p + 1})$.
%This creates $M_{i + 1}$ of type 
$M\mapsto M'$ of type $\displaystyle
\frac{a_{1} + 2a_{2}|a_{3}|a_{4}|\dots|a_{m}}{a_{2}|b_{1}|b_{2}| \dots |b_{t}}.$
\item {\bf Flip-Up:} $M\mapsto M'$ of type 
$\displaystyle\frac{b_{1}|b_{2}|b_3|\dots|b_t}{{a_{1}|a_{2}|a_3|\dots|a_m}}.$

\ee
\end{lemma}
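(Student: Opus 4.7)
The plan is to derive Lemma \ref{Expansion} as an immediate corollary of the Winding-down Lemma \ref{thm:Contraction} by induction on the number of vertices $n$ of the Frobenius meander $M$. The base case $n=1$ is trivial since the one-vertex meander is itself the starting object. For the inductive step with $n\geq 2$, observe that since $M$ is Frobenius the case $a_1=b_1$ is excluded (this would create a closed component and destroy the single-path structure), so $M$ falls into exactly one of the four cases of Lemma \ref{thm:Contraction}. Apply the corresponding Winding-down move to obtain a smaller Frobenius meander $M^{\ast}$; by induction, $M^{\ast}$ arises from the one-vertex meander by a sequence of Winding-up moves.

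It then suffices to exhibit, for each Winding-down move, a Winding-up move that inverts it: appending this inverse to the sequence for $M^{\ast}$ then recovers $M$. The four pairings are exactly those displayed in Lemmas \ref{thm:Contraction} and \ref{Expansion} under matching names. For instance, if Block Elimination was applied (so $a_1=2b_1$), then $M^{\ast}$ has type $\frac{b_1|a_2|\dots|a_m}{b_2|\dots|b_t}$, and Block Creation applied to $M^{\ast}$ yields type $\frac{2b_1|a_2|\dots|a_m}{b_1|b_2|\dots|b_t}$, which is exactly $M$. Analogous one-line calculations verify that Pure Expansion inverts Pure Contraction, Flip-Up inverts Flip-Down, and Rotation Expansion inverts Rotation Contraction. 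In this last pair, the inequality $b_1<a_1<2b_1$ on $M$ translates, under the relabeling $a_1'=b_1$ and $b_1'=2b_1-a_1$ performed by Rotation Contraction, precisely into the hypothesis $a_1'>b_1'$ required for Rotation Expansion to apply to $M^{\ast}$.

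The only real obstacle is the book-keeping of indices and inequalities after each relabeling, in order to confirm that the hypotheses stated in Lemma \ref{Expansion} are the correct preimages of those in Lemma \ref{thm:Contraction} and that no Winding-down case is overlooked. These verifications are routine algebra and pose no substantive difficulty, which is why the author can fairly describe the result as immediate.
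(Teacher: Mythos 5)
Your approach is the same one the paper takes: the paper disposes of this lemma with the single remark that it is ``immediate by simply reversing each move in Lemma~\ref{thm:Contraction},'' and your move-by-move inversions (Block Creation inverting Block Elimination, Rotation Expansion inverting Rotation Contraction with the hypothesis $a_1'>b_1'$ correctly traced back to $b_1<a_1<2b_1$, Pure Expansion inverting Pure Contraction, Flip-Up inverting Flip-Down) are all verified correctly. You also rightly note that $a_1=b_1$ is excluded for a Frobenius meander with $n\geq 2$, so the four Winding-down cases are exhaustive.

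There is, however, one step that fails as written: your induction is on the number of vertices $n$, and you assert that each Winding-down move produces a \emph{smaller} Frobenius meander $M^{\ast}$. That is true for Block Elimination, Rotation Contraction, and Pure Contraction (each removes $b_1$ or $a_1-b_1$ vertices), but it is false for Flip-Down, which merely exchanges the top and bottom compositions and leaves $n$ unchanged. In that case you cannot invoke the inductive hypothesis on $M^{\ast}$, and a priori the induction could cycle (flip, flip back, flip again). The repair is easy but must be said: after a Flip-Down the new meander has $a_1'=b_1>a_1=b_1'$, so Flip-Down can never be applied twice in succession, and the next Winding-down move strictly decreases $n$. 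So either induct on the lexicographic pair $\bigl(n,\,[a_1<b_1]\bigr)$, or treat ``Flip-Down followed by one $n$-decreasing move'' as a single composite step in the induction. With that adjustment your argument is complete and matches the paper's intent.
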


Lastly, we define the sets that will be needed in
the statement and proof of Lemma \ref{thm:unbroken} below,
which establishes that the set eigenvalues from each block
is unbroken.
Let $M$ be a Frobenius meander and let $A$ be a subset of its vertices.
Define the following sets of measures (note
that multisets are not needed to prove the unbroken property).
\begin{equation}\label{eq:sleft}
\sigma(\overleftarrow{A})=\{m(i,j)\mid v_i,v_j\in A\text{ and }i\geq j\},
\end{equation}
\begin{equation}\label{eq:sright}
\sigma(\overrightarrow{A})=\{m(i,j)\mid v_i,v_j\in A\text{ and }i\leq j\}.
\end{equation}
Also, if $A$ has leftmost vertex $v_j$ and rightmost vertex
$v_p$, define
\begin{equation}\label{eq:tleft}
\tau(\overleftarrow{A})=\{m(i,j)\mid j\leq i\leq p\},
\end{equation}
%And if $A$ has leftmost vertex $v_j$ and rightmost vertex $v_p$, define
\begin{equation}\label{eq:tright}
\tau(\overrightarrow{A})=\{m(i,p)\mid j\leq i\leq p\}.
\end{equation}
%Letting $A_1,\dots ,A_m$ and $B_1,\dots ,B_t$ denote the top and
%bottom blocks respectively of $M$, define
%\begin{equation}
%\sigma(M)=\sigma(\overleftarrow{A_1})\cup\dots\cup\sigma(\overleftarrow{A_m})
%\cup\sigma(\overrightarrow{B_1})\cup\dots\cup\sigma(\overrightarrow{B_t})\cup\{0\}.
%\end{equation}
%The set of eigenvalues of the seaweed corresponding to $M$ is precisely 
%the set corresponding to $\sigma(M)$,
%thus our goal is to prove the following theorem.

\begin{lemma}\label{thm:unbroken}
For $n\geq 2$, if $M$ is Frobenius meander with $n$ vertices, then
$\sigma(\overleftarrow{A})$ is an unbroken set for every top block $A$,
and $\sigma(\overrightarrow{B})$ is an unbroken set for every bottom block
$B$.

\end{lemma}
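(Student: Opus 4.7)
The plan is to prove Lemma \ref{thm:unbroken} by induction on the number of Winding-up moves (from Lemma \ref{Expansion}) used to construct the Frobenius meander $M$ from the one-vertex meander. The base case $n = 2$ is immediate. Following the authors' hint, because a single Winding-up move can fuse vertices from several old blocks into one new block, induction on the $\sigma$-sets alone is unlikely to close, so the inductive hypothesis is strengthened to assert that $\sigma(\overleftarrow{A})$, $\sigma(\overrightarrow{A})$, $\tau(\overleftarrow{A})$, and $\tau(\overrightarrow{A})$ are all unbroken for every top and bottom block of $M$. (Note that $\sigma(\overleftarrow{A})$ and $\sigma(\overrightarrow{A})$ are negatives of each other, but the $\tau$-sets carry genuinely new information which is used as glue across block boundaries.)

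For the inductive step, suppose $M'$ arises from $M$ by a single Winding-up move, and split into four cases according to which move is applied. In every case, Item \ref{prop1} of Lemma \ref{prop:simple} lets us expand any measure $m(i,j)$ in $M'$ as a telescoping sum of simple eigenvalues, and Items \ref{prop2}--\ref{prop5}, together with Lemma \ref{lem:simple}, pin down the new simple eigenvalues at the positions where the move has introduced new edges: they are always $\pm 1$, $\pm 2$, or $\pm 3$. Consequently each new path measure in $M'$ either coincides with a measure already present in $M$ or differs from one by a small, explicit correction. Combined with the inductive hypothesis, this lets us write each new $\sigma$- or $\tau$-set of $M'$ as a union of translates of previously established unbroken sets; the main work becomes checking that these translates overlap or meet at consecutive integers and thus merge into a single unbroken set.

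The hardest case will be Pure Expansion, in which the new first top block of size $a_1 + 2a_2$ absorbs two formerly separate top blocks interleaved with a newly created bottom block of size $a_2$. Here the new $\sigma(\overleftarrow{A'})$ is not a translate of a single old $\sigma$-set; rather it patches together contributions from two different old pieces plus cross-measures running through the new vertices, and we must argue that these contributions line up into one unbroken interval. This is exactly where the strengthened hypothesis on the $\tau$-sets earns its keep, since $\tau(\overleftarrow{\cdot})$ and $\tau(\overrightarrow{\cdot})$ provide the measurements through the vertex that joins the two old pieces inside the new block; and this is where the symmetry result Lemma \ref{thm:symmetric} plays a decisive role, forcing both sub-intervals to be centered about $1/2$ so that they cannot drift apart to leave a gap. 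Block Creation and Rotation Expansion follow the same template with shorter case analyses, and Flip-Up is essentially free because reversing every path negates every measure and simply swaps the roles of $\sigma(\overleftarrow{\cdot})$ and $\sigma(\overrightarrow{\cdot})$ (and of $\tau(\overleftarrow{\cdot})$ and $\tau(\overrightarrow{\cdot})$), preserving unbrokenness.
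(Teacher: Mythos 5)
You have reproduced the paper's architecture exactly: induction on the Winding-up moves of Lemma \ref{Expansion}, a strengthened inductive hypothesis that also tracks the $\tau$-sets, a case analysis over the moves with Flip-Up dismissed as an isomorphism, and the use of Lemmas \ref{prop:simple} and \ref{lem:simple} to control the new simple eigenvalues. The genuine gap is in where, and by what mechanism, you invoke the symmetry result. You place it in Pure Expansion, claiming it forces ``both sub-intervals to be centered about $1/2$ so that they cannot drift apart.'' That mechanism is not valid: symmetry of a block's full spectrum about one half does not center the individual pieces, and a symmetric union can still have gaps (the set $\{-4\}\cup\{5\}$ is symmetric about one half and broken). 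In the paper's Pure Expansion case the pieces are in fact not centered at one half --- for example $\sleft{C'}=-\sleft{B}$ sits about $-1/2$, and $\sigma(C'\leftarrow A')=-\tleft{B}+\gamma+\tleft{A}$ is shifted by $\gamma$ --- and what prevents drift is exactly the bound $\gamma\in\{1,2,3\}$ of Lemma \ref{lem:simple} combined with the inductive unbrokenness of $\tleft{A}$ and $\tleft{B}$, with no appeal to symmetry at all.

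Conversely, Rotation Expansion, which you dismiss as following ``the same template with shorter case analyses,'' is precisely the case that cannot be closed without Lemma \ref{thm:symmetric}. After that move the new top block contains $-\sleft{A\cup B}$ (equations \eqref{re1}--\eqref{re3}), but also the cross set $\sigma(B'\leftarrow C')=\sigma(A\leftarrow B)+1$ of equation \eqref{re6}; here $\sigma(A\leftarrow B)$ is merely some subset of $\sleft{A\cup B}$, is not one of the inductively controlled sets, need not be unbroken, and has no evident attachment to the unbroken core. The paper bridges this by applying Lemma \ref{thm:symmetric} to the new meander $M'$: the new block's spectrum is symmetric about one half and contains the unbroken set $-\sleft{A\cup B}$ (itself symmetric about $-1/2$), hence must also contain its reflection $\sleft{A\cup B}+1$, and every remaining piece from \eqref{re4}--\eqref{re6} lies inside the span of these two overlapping intervals. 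So your skeleton is the paper's, but the symmetry input is attached to the wrong case and justified by an argument that would fail; as written, your Rotation Expansion step has a hole.
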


\begin{proof} Our proof is by  
induction on the number of Winding-up moves from Lemma \ref{Expansion},
that $\sigma(\overleftarrow{A})$ and $\tau(\overleftarrow{A})$ are unbroken
for every top block, and $\sigma(\overrightarrow{B})$ and 
$\tau(\overrightarrow{B})$ are unbroken for every bottom block.
This will prove Lemma \ref{thm:unbroken} since each unbroken set contains
zero. We note that we will need the inductive hypothesis regarding a set of
the form $\tau(\overleftarrow{A})$ to prove that a set of the form
$\sleft{A}$ is unbroken when a 
Pure Expansion moved is performed (see equation \eqref{pe5} below).  Also, we need not consider the Flip-up move since this merely replaces $M$ with an inverted isomorphic copy and so has no effect on our eigenvalue calculations.

The base of the induction is the Frobenius seaweed $\frac{2}{1|1}$.
For the top block we have 
$\sigma(\overleftarrow{A})=\tau(\overleftarrow{A})=\{0,1\}$.
For each bottom block 
$\sigma(\overrightarrow{B})=\tau(\overrightarrow{B})=\{0\}.$  Now, 
let $M$ be a Frobenius meander of type 
$\frac{a_1|a_2|\dots |a_m}{b_1|b_2|\dots |b_t}$. 
By way of induction, we assume that the all the sets mentioned 
in our inductive hypothesis are unbroken.
%Performing a Flip move on $M$ has no effect on the spectrum.
In what follows we break the proof into three cases based on 
which of the Winding-up moves is performed to produce a larger Frobenius meander 
$M'$.
%In each case, we use the measures contributed by 
%groups of vertices in $M$ to construct the measures 
%contributed by groups in $M'$.  
%We note that for every block in $M'$ whose size is not affected by 
%a winding up move (although its location may be shifted), then
%$\sigma(\overleftarrow{A})$ and $\tau(\overleftarrow{A})$ or
%$\sigma(\overrightarrow{B})$ $\tau(\overrightarrow{B})$ will be 
%the same set in both $M$ and $M'$. (Prove this using simple eigenvalues???)
%The subcases below will codify the fact 
%that the set $\sigma(M')$ contains the set $\sigma(M)$, and possibly expands
%the unbroken set by one in each direction.
%with the singular exception of expansion on both sides of the unbroken string.
We will make use of the following notation.
Given any sets $S, T$ and an integer $c$, define $S + c=\{s+c\mid s\in S\}$, 
$-S=\{-s\mid s\in S\}$, and $S+T=\{s+t\mid s\in S \text{ and }t\in T\}$.
Given set of vertices $A$ and $B$
where all the vertices of $A$ lie to the left of all the vertices in $B$,
define 
$\displaystyle\sigma(A\leftarrow B)=\{m(j,i)\mid v_i\in A\text{ and }v_j\in B\}$,
and
$\displaystyle\sigma(A\rightarrow B)=\{m(i,j)\mid v_i\in A\text{ and }v_j\in B\}$.

\setcounter{case}{0}
\begin{case}\label{bc}
A Block Creation move is performed.
\end{case}

Let $A$ be the set consisting of the first $a_{1}$ vertices of $M$, namely
$A=\{v_1,\dots ,v_{a_1}\}$.  After the Block Creation is performed, let $A'$ be the set consisting of the first $a_{1}$ vertices of $M'$, namely $A'=\{v_1,\dots ,v_{a_1}\}$. Let $B'$ be the set consisting of the next $a_{1}$ vertices of $M'$, namely $B'=\{v_{a_{1} + 1}, \dots, v_{2a_{1}}\}$. 

Let $k=a_1-1$ and for $i=1,\dots ,k$ let $\alpha_i=m(i+1,i)$.  In this notation, the simple eigenvalues of first top block of $M$, read from left to right are $\alpha_1,\dots ,\alpha_{k}$.  Similarly, the simple eigenvalues of first top block of $M'$, also read from left to right are $\alpha_k,\dots ,\alpha_1$. 
The simple eigenvalues for the first top block of $M'$ are 
$-\alpha_k,\dots ,-\alpha_1,1,\alpha_1,\dots ,\alpha_k$. And the simple eigenvalues for every other block of $M'$ are the same 
as the simple eigenvalues of the block of corresponding size in $M$. 
For example, see Figure \ref{fig:blockcreation} below where $k=3$.

\begin{figure}[H]
\[\begin{tikzpicture}[scale=.9]
\tikzstyle{every node}=[draw,circle,fill=white,minimum size=4pt,inner sep=0pt]
\tikzset{->-/.style={decoration={markings,
mark=at position #1 with {\arrow[scale=1.5,>=stealth]{>}}},postaction={decorate}}}
\node[draw=none] at (1.5,.17) {$\alpha_1$};
\node[draw=none] at (2.5,.17) {$\alpha_2$};
\node[draw=none] at (3.5,.17) {$\alpha_3$};

\draw (1,0) node (1) {};
\draw (2,0) node (2) {};
\draw (3,0) node (3) {};
\draw (4,0) node (4) {};
\draw (5,0) node (5) {};

\draw[dotted] (0.5,0) -- (5.5,0);
\draw [->-=.5] (4) to [bend right=70] (1);
\draw [->-=.5] (3) to [bend right=90] (2);
\draw [dashed] (5.5,.5) to [bend right=20] (5);
\draw [dashed] (1) to [bend right=20] (1.5,-.5);
\draw [dashed] (2) to [bend right=20] (2.5,-.5);
\draw [dashed] (3) to [bend right=20] (3.5,-.5);
\draw [dashed] (4) to [bend right=20] (4.5,-.5);
\draw [dashed] (5) to [bend right=20] (5.5,-.5);

;\end{tikzpicture}
\hspace{1cm}
\begin{tikzpicture}[scale=.9]
\tikzstyle{every node}=[draw,circle,fill=white,minimum size=4pt,inner sep=0pt]
\tikzset{->-/.style={decoration={markings,
mark=at position #1 with {\arrow[scale=1.5,>=stealth]{>}}},postaction={decorate}}}
\node[draw=none] at (1.5,.17) {$-\alpha_3$};
\node[draw=none] at (2.5,.17) {$-\alpha_2$};
\node[draw=none] at (3.5,.17) {$-\alpha_1$};
\node[draw=none] at (4.5,.17) {1};
\node[draw=none] at (5.5,.17) {$\alpha_1$};
\node[draw=none] at (6.5,.17) {$\alpha_2$};
\node[draw=none] at (7.5,.17) {$\alpha_3$};
\node[draw=none] at (1.5,-.17) {$\alpha_3$};
\node[draw=none] at (2.5,-.17) {$\alpha_2$};
\node[draw=none] at (3.5,-.17) {$\alpha_1$};

\draw (1,0) node (1) {};
\draw (2,0) node (2) {};
\draw (3,0) node (3) {};
\draw (4,0) node (4) {};
\draw (5,0) node (5) {};
\draw (6,0) node (6) {};
\draw (7,0) node (7) {};
\draw (8,0) node (8) {};
\draw (9,0) node (9) {};

\draw[dotted] (0.5,0) -- (9.5,0);
\draw [->-=.5] (8) to [bend right=70] (1);
\draw [->-=.5] (7) to [bend right=70] (2);
\draw [->-=.5] (6) to [bend right=70] (3);
\draw [->-=.5] (5) to [bend right=85] (4);
\draw [->-=.5] (1) to [bend right=70] (4);
\draw [->-=.5] (2) to [bend right=85] (3);
\draw [dashed] (9.5,.5) to [bend right=20] (9);
\draw [dashed] (5) to [bend right=20] (5.5,-.5);
\draw [dashed] (6) to [bend right=20] (6.5,-.5);
\draw [dashed] (7) to [bend right=20] (7.5,-.5);
\draw [dashed] (8) to [bend right=20] (8.5,-.5);
\draw [dashed] (9) to [bend right=20] (9.5,-.5);

;\end{tikzpicture}
\]

\caption{Block Creation applied to $M$ (left) to obtain $M'$ (right), with
simple eigenvalues shown}
\label{fig:blockcreation}
\end{figure}

%A simple consideration of each pair within these segments yields the following.
%New bottom block of size $a_1$:
%\be
%\item $\sright{A'}=\sleft{A}$.
%\item $\tright{A'}=\tleft{A}$
%\ee
For the new bottom block of size $a_1$, clearly $\sright{A'}=\sleft{A}$
and $\tright{A'}=\tleft{A}$, and by induction both $\sleft{A}$ and
$\tleft{A}$ are unbroken.  For the new top block of size $2a_1$, we claim that the following equations hold:
\begin{eqnarray}
\sleft{A'}&=& -\sleft{A},\label{bc1}\\
\sleft{B'}&=&\sleft{A},\label{bc2}\\
\sigma(A' \leftarrow B') 
&=& \left[\sleft{A} + 1\right] 
\cup \left[-\sleft{A} + 1\right]\label{bc3},\\
\tleft{A'\cup B'}&=&\left[\tleft{A}-1\right]\cup\tleft{A}.\label{bc4}
\end{eqnarray}

%\be
%\item[(1.1)]\label{bc1} $\sleft{A'}
%= -\sleft{A}$.
%\item[(1.2)]\label{bc2} $\sleft{B'}
%= \sleft{A}$.
%\item[(1.3)]\label{bc3} $\sigma(A' \leftarrow B') 
%= \left[\sleft{A} + 1\right] 
%\cup \left[-\sleft{A} + 1\right]$.
%\item[(1.4)]\label{bc4} $\tleft{A'\cup B'}=\left[\tleft{A}-1\right]\cup\tleft{A}.$
%\ee

Equations \eqref{bc1}, \eqref{bc2}, and \eqref{bc3} are obvious.
To prove equation \eqref{bc4}, let $q$ be an integer
satisfying $k+1\geq q\geq 1$.
Note that $\sum_{i=1}^{k}\alpha_i=1$, so we have 
\[-\alpha_k-\alpha_{k-1}-\dots -\alpha_{q}
=\alpha_1+\alpha_2+\dots +\alpha_{q-1}-1\in\tleft{A}-1.\]
%Note also that 
%\[(\alpha_1+\alpha_2+\dots +\alpha_{q-1})
%+(\alpha_q+\alpha_{q+1}\dots +\alpha_k)=1\]
%is equivalent to the fact that $m(q,1)$ and $m(k,q)$ are a pair
%of eigenvalues whose mean is one-half, as in the proof of Lemma
%\ref{thm:symmetric}.
If $r$ is an integer satisfying $0\leq r\leq k$, then
\[-\alpha_k-\alpha_{k-1}-\dots -\alpha_1+1+\alpha_1+\alpha_2+\dots +\alpha_{r}
=\alpha_1+\alpha_2+\dots +\alpha_{r}\in\tleft{A}.\]

Since $\sleft{A}$ is an unbroken set containing 0, it follows
that the union of the right hand sides of equations \eqref{bc1}, 
\eqref{bc2}, and \eqref{bc3} is unbroken.   And
since $\tleft{A}$ is an unbroken set, it also
follows that the right hand side of equation \eqref{bc4} is unbroken.

%Note that $[-\sigma(\overleftarrow{A}) + 1] = \sigma(\overleftarrow{A})$ since $A$ is a block and we have assumed $\sigma(\overleftarrow{A})$ is symmetric and unimodal.  Also the measures contributed in Case~\ref{Block:BpLeft} are balanced by those contributed in the first part of Case~\ref{Block:B2A}.  Since the contribution of each block in $M$ was symmetric and unimodal and the pairs of segments that balance each other are in the same block, the contribution of each block to the measures in $M'$ is still symmetric and unimodal as desired.  The additional list of $(1^{|A|})$ measures is also balanced by the addition of $|A|$ vertices, contributing $(0^{|A|})$ to the total.  Also, it is easy to check that the list of measures contributed by pairs of blocks where every pair of vertices between the two blocks is admissible will still be unimodal and symmetric about either $-0.5, 0.5$ or $1.5$.

\begin{case}\label{re}
A Rotation Expansion move is performed.
\end{case}

In order for a Rotation Expansion move to be performed, we must have $a_{1} > b_{1}$.  Let $A$ be the first $b_{1}$ vertices of $M$, namely $A=\{v_{1}, \dots, v_{b_{1}}\}$, and let $B$ be what remains of the first $a_{1}$ vertices of $M$, namely $B=\{v_{b_{1} + 1}, \dots, v_{a_{1}}\}$.  After the Rotation Expansion is performed, let $A'$ be the first $a_{1} - b_{1}$ vertices of $M'$, namely $A'=\{v_1,\dots ,v_{a_1-b_1}\}$. Let $B'$ be what remains of the first $a_{1}$ vertices of $M'$, namely $B'=\{v_{a_1-b_1+1},\dots ,v_{a_1}\}$. Let $C'$ be the next $a_{1} - b_{1}$ vertices of $M'$, namely $C'=\{v_{a_{1} + 1}, \dots, v_{2a_{1} - b_{1}}\}$.

Let $k=a_1-1$ and for $i=1,\dots ,k$ let $\alpha_i=m(i+1,i)$, thus 
$\alpha_1,\dots ,\alpha_{k}$ are the simple eigenvalues for the
first top block of $M$. Let $p=b_1-1$, thus 
$-\alpha_1,\dots ,-\alpha_p$ are the simple eigenvalues for the
first bottom block of $M$.
Therefore, the simple eigenvalues for the first bottom block of $M'$ 
are $\alpha_k,\dots ,\alpha_1$.
The simple eigenvalues for
the first top block of $M'$ are 
$-\alpha_k,\dots ,-\alpha_1,\alpha_{p+1},\alpha_{p+2},\dots ,\alpha_k$.
The simple eigenvalues for every other block of $M'$ are the same 
as the simple eigenvalues of the block of corresponding size in $M$.
For example, see Figure 
\ref{fig:rotationexpansion} below where $k=4$ and $p=2$.

\begin{figure}[H]
\[\begin{tikzpicture}[scale=.9]
\tikzstyle{every node}=[draw,circle,fill=white,minimum size=4pt,inner sep=0pt]
\tikzset{->-/.style={decoration={markings,
mark=at position #1 with {\arrow[scale=1.5,>=stealth]{>}}},postaction={decorate}}}
\node[draw=none] at (1.45,-.19) {$-\alpha_1$};
\node[draw=none] at (2.45,-.19) {$-\alpha_2$};
\node[draw=none] at (1.5,.23) {$\alpha_1$};
\node[draw=none] at (2.5,.23) {$\alpha_2$};
\node[draw=none] at (3.5,.23) {$\alpha_3$};
\node[draw=none] at (4.5,.23) {$\alpha_4$};

\draw (1,0) node (1) {};
\draw (2,0) node (2) {};
\draw (3,0) node (3) {};
\draw (4,0) node (4) {};
\draw (5,0) node (5) {};
\draw (6,0) node (6) {};

\draw[dotted] (0.5,0) -- (6.5,0);
\draw [->-=.5] (5) to [bend right=70] (1);
\draw [->-=.5] (4) to [bend right=70] (2);
\draw [dashed] (6.5,.5) to [bend right=20] (6);
\draw [->-=.5] (1) to [bend right=70] (3);
\draw [dashed] (4) to [bend right=20] (4.5,-.5);
\draw [dashed] (5) to [bend right=20] (5.5,-.5);
\draw [dashed] (6) to [bend right=20] (6.5,-.5);

;\end{tikzpicture}
\hspace{1cm}
\begin{tikzpicture}[scale=.9]
\tikzstyle{every node}=[draw,circle,fill=white,minimum size=4pt,inner sep=0pt]
\tikzset{->-/.style={decoration={markings,
mark=at position #1 with {\arrow[scale=1.5,>=stealth]{>}}},postaction={decorate}}}
\node[draw=none] at (1.5,.2) {$-\alpha_4$};
\node[draw=none] at (2.5,.2) {$-\alpha_3$};
\node[draw=none] at (3.5,.2) {$-\alpha_2$};
\node[draw=none] at (4.5,.2) {$-\alpha_1$};
\node[draw=none] at (5.5,.2) {$\alpha_3$};
\node[draw=none] at (6.5,.2) {$\alpha_4$};
\node[draw=none] at (1.5,-.2) {$\alpha_4$};
\node[draw=none] at (2.5,-.2) {$\alpha_3$};
\node[draw=none] at (3.5,-.2) {$\alpha_2$};
\node[draw=none] at (4.5,-.2) {$\alpha_1$};

\draw (1,0) node (1) {};
\draw (2,0) node (2) {};
\draw (3,0) node (3) {};
\draw (4,0) node (4) {};
\draw (5,0) node (5) {};
\draw (6,0) node (6) {};
\draw (7,0) node (7) {};
\draw (8,0) node (8) {};

\draw[dotted] (0.5,0) -- (8.5,0);
\draw [->-=.5] (7) to [bend right=70] (1);
\draw [->-=.5] (6) to [bend right=70] (2);
\draw [->-=.5] (5) to [bend right=70] (3);
\draw [dashed] (8.5,.5) to [bend right=20] (8);
\draw [->-=.5] (1) to [bend right=70] (5);
\draw [->-=.5] (2) to [bend right=70] (4);
\draw [dashed] (6) to [bend right=20] (6.5,-.5);
\draw [dashed] (7) to [bend right=20] (7.5,-.5);
\draw [dashed] (8) to [bend right=20] (8.5,-.5);

;\end{tikzpicture}
\]

\caption{Rotation expansion applied to $M$ (left) to obtain $M'$ (right), with
simple eigenvalues shown}
\label{fig:rotationexpansion}
\end{figure}

%A simple consideration of each pair within these segments yields the following, in which the sigma function of each set in $M'$ is defined in terms of the sigma function of a corresponding set in $M$.

For the new bottom block of size $a_1$, clearly 
$\sright{A'\cup B'}=\sleft{A\cup B}$
and $\tright{A'\cup B'}=\tleft{A\cup B}$, so by induction both are unbroken.  For the new top block of size $2a_1-b_1$, we claim that:
\begin{eqnarray}
\label{re1} \sigma(\overleftarrow{A'}) &=& -\sigma(\overleftarrow{B}),\\
\label{re2} \sigma(\overleftarrow{B'}) &=& -\sigma(\overleftarrow{A}),\\
\label{re3} \sigma(A' \leftarrow B') &=& -\sigma(A \leftarrow B),\\
\label{re4} \sigma(\overleftarrow{C'}) &=& \sigma(\overleftarrow{B}),\\
\label{re5} \sigma(A' \leftarrow C') 
&=& [\sigma(\overleftarrow{B}) + 1] \cup [-\sigma(\overleftarrow{B}) + 1],\\
\label{re6} \sigma(B' \leftarrow C') &=& \sigma(A \leftarrow B) + 1,\\
\label{re7} \tleft{A'\cup B'\cup C'} &=& \left[\tleft{A\cup B}-1\right]
\cup S, \text{ where }S\subset\tleft{A\cup B}.
\end{eqnarray}

Equations \eqref{re1} through \eqref{re5} are obvious.
To prove equation \eqref{re6}, note that every element of $\sigma(B' \leftarrow C')$ has the form
\[-\alpha_{r}-\alpha_{r-1}- \dots -\alpha_{1}
+\alpha_{p+1}+\alpha_{p+2}+\dots +\alpha_{q},\]
for some integers $r$ and $q$ where $p\geq r\geq 0$ and $p+1\leq q\leq k$. 
Since $\sum_{i=1}^p-\alpha_i=1$ we have
\[(-\alpha_{r}-\alpha_{r-1}- \dots -\alpha_{1})
+\alpha_{p+1}+\alpha_{p+2}+\dots +\alpha_{q}
=(1+\alpha_{r+1}+\alpha_{r+2}+\dots +\alpha_p)
+\alpha_{p+1}+\alpha_{p+1}+\dots +\alpha_q.\]
As $r$ and $q$ vary, this generates the set $\sigma(A \leftarrow B) + 1$ as desired.

To prove equation \eqref{re7}, note that if $q$ is an integer such that $k+1\geq q\geq 1$, then
\[-\alpha_k-\alpha_{k-1}-\dots -\alpha_{q}
=-1+\alpha_1+\alpha_2+\dots +\alpha_{q-1}
\in\tleft{A\cup B}-1,\]
since $\sum_{i=1}^k-\alpha_i=-1$.
And if $r$ is an integer such that $p+1\leq r\leq k$, then
\[-\alpha_k-\alpha_{k-1}-\dots -\alpha_{1}
+\alpha_{p+1}+\alpha_{p+2}+\dots +\alpha_{r}
=-1+\alpha_{p+1}+\alpha_{p+2}+\dots +\alpha_{r}
=\alpha_1+\alpha_2+\dots +\alpha_r.\]
Letting $S=\{\alpha_1+\alpha_2+\dots +\alpha_r\mid p+1\leq r\leq k\}$,
we have $S\subset\tleft{A\cup B}$.

It follows from  
equations \eqref{re1}, \eqref{re2}, and \eqref{re3} that $-\sleft{A\cup B}$ 
is a subset of the eigenvalues in the new 
top block. By induction, this is an unbroken set that is symmetric about
minus one-half. By symmetry, the new top block must also include the eigenvalues
$\sleft{A\cup B}+1$. Therefore, the union of the right hand sides of
equations \eqref{re1} through \eqref{re6} is an unbroken set. Also,
since $\tleft{A\cup B}$ is unbroken by induction, the right hand side of
equation \eqref{re7} is also unbroken.

%Here we note that Cases~\ref{RotationCase:C2B} and~\ref{RotationCase:B2A} balance each other and Case~\ref{RotationCase:LeftAp} balances with the first half of Case~\ref{RotationCase:C2A} (namely, $\sigma(\overleftarrow{B} + 1)$).  Thus the resulting spectrum is symmetric.  Since the contribution of each block in $M$ was unimodal and the pairs of segments that balance each other are in the same block, the contribution of each block in $M'$ is still unimodal as desired.

\begin{case}\label{pe}
A Pure Expansion move is performed.
\end{case}

Let $A$ be the first $a_1$ vertices of $M$, namely $A=\{v_1,\dots ,v_{a_1}\}$.
Let $B$ be the next $a_2$ vertices of $M$, 
namely $B=\{v_{a_1+1},\dots ,v_{a_1+a_2}\}$.
After the Pure Expansion is performed,
let $C'$ be the first $a_2$ vertices of $M'$,
namely $C'=\{v_1,\dots ,v_{a_2}\}$.
Let $A'$ be the next $a_1$ vertices of $M'$,
namely $A'=\{v_{a_2+1},\dots ,v_{a_2+a_1}\}$.
Let $B'$ be the next $a_2$ vertices of $M'$,
namely $B'=\{v_{a_2+a_1+1},\dots ,v_{a_1+2a_2}\}$.

Let $k=a_1-1$ and for $i=1,\dots ,k$ let $\alpha_i=m(i+1,i)$, thus 
$\alpha_1,\dots ,\alpha_{k}$ are the simple eigenvalues for the
first top block of $M$. Let $p=a_2-1$ and for $i=1,\dots ,p$ 
let $\epsilon_i=m(i+1,i)$, thus 
$\epsilon_1,\dots ,\epsilon_p$ are the simple eigenvalues for the
second bottom block of $M$. Let $\gamma=m(a_1,a_1+1)$. Note that
whenever $m(i,i+1)$ is a simple eigenvalue but $m(i+1,i)$ is not,
$m(i,i+1)$ must be positive (similarly $m(i+1,i)>0$ if $m(i+1,i)$
is a simple eigenvalue but $m(i,i+1)$ is not). Thus, by Lemma \ref{lem:simple},
$\gamma$ is either 1, 2, or 3.

The simple eigenvalues for the first bottom block of $M'$ 
are $\epsilon_p,\dots ,\epsilon_1$.
The simple eigenvalues for
the first top block of $M'$ are 
$-\epsilon_p,\dots ,-\epsilon_1,\gamma,\alpha_{1},\dots ,\alpha_k,-\gamma,
\epsilon_1,\dots ,\epsilon_p$.
The simple eigenvalues for every other block of $M'$ are the same 
as the simple eigenvalues of the block of corresponding size in $M$.
For example, see Figure 
\ref{fig:pureexpansion} below where $k=3$ and $p=2$.

\begin{figure}[H]
\[\begin{tikzpicture}[scale=.8]
\tikzstyle{every node}=[draw,circle,fill=white,minimum size=4pt,inner sep=0pt]
\tikzset{->-/.style={decoration={markings,
mark=at position #1 with {\arrow[scale=1.5,>=stealth]{>}}},postaction={decorate}}}
\node[draw=none] at (1.5,.17) {$\alpha_1$};
\node[draw=none] at (2.5,.17) {$\alpha_2$};
\node[draw=none] at (3.5,.17) {$\alpha_3$};
\node[draw=none] at (4.5,-.17) {$\gamma$};
\node[draw=none] at (5.5,.17) {$\epsilon_1$};
\node[draw=none] at (6.5,.17) {$\epsilon_2$};

\draw (1,0) node (1) {};
\draw (2,0) node (2) {};
\draw (3,0) node (3) {};
\draw (4,0) node (4) {};
\draw (5,0) node (5) {};
\draw (6,0) node (6) {};
\draw (7,0) node (7) {};
\draw (8,0) node (8) {};

\draw[dotted] (0.5,0) -- (8.5,0);
\draw [->-=.5] (4) to [bend right=70] (1);
\draw [->-=.5] (3) to [bend right=85] (2);
\draw [->-=.5] (7) to [bend right=70] (5);
\draw [dashed] (8.5,.5) to [bend right=20] (8);
\draw [dashed] (1) to [bend right=20] (1.5,-.5);
\draw [dashed] (2) to [bend right=20] (2.5,-.5);
\draw [dashed] (3) to [bend right=20] (3.5,-.5);
\draw [dashed] (4) to [bend right=20] (4.5,-.5);
\draw [dashed] (5) to [bend right=20] (5.5,-.5);
\draw [dashed] (6) to [bend right=20] (6.5,-.5);
\draw [dashed] (7) to [bend right=20] (7.5,-.5);
\draw [dashed] (8) to [bend right=20] (8.5,-.5);

;\end{tikzpicture}
\hspace{.5cm}
\begin{tikzpicture}[scale=.8]
\tikzstyle{every node}=[draw,circle,fill=white,minimum size=4pt,inner sep=0pt]
\tikzset{->-/.style={decoration={markings,
mark=at position #1 with {\arrow[scale=1.5,>=stealth]{>}}},postaction={decorate}}}
\node[draw=none] at (1.5,.18) {$-\epsilon_2$};
\node[draw=none] at (2.5,.18) {$-\epsilon_1$};
\node[draw=none] at (3.5,.18) {$\gamma$};
\node[draw=none] at (4.5,.18) {$\alpha_1$};
\node[draw=none] at (5.5,.18) {$\alpha_2$};
\node[draw=none] at (6.5,.18) {$\alpha_3$};
\node[draw=none] at (7.5,.18) {$-\gamma$};
\node[draw=none] at (8.5,.18) {$\epsilon_1$};
\node[draw=none] at (9.5,.18) {$\epsilon_2$};
\node[draw=none] at (1.5,-.18) {$\epsilon_2$};
\node[draw=none] at (2.5,-.18) {$\epsilon_1$};

\draw (1,0) node (1) {};
\draw (2,0) node (2) {};
\draw (3,0) node (3) {};
\draw (4,0) node (4) {};
\draw (5,0) node (5) {};
\draw (6,0) node (6) {};
\draw (7,0) node (7) {};
\draw (8,0) node (8) {};
\draw (9,0) node (9) {};
\draw (10,0) node (10) {};
\draw (11,0) node (11) {};

\draw[dotted] (0.5,0) -- (11.5,0);
\draw [->-=.5] (10) to [bend right=70] (1);
\draw [->-=.5] (9) to [bend right=70] (2);
\draw [->-=.5] (8) to [bend right=70] (3);
\draw [->-=.5] (7) to [bend right=70] (4);
\draw [->-=.5] (6) to [bend right=85] (5);
\draw [dashed] (11.5,.5) to [bend right=20] (11);
\draw [->-=.5] (1) to [bend right=70] (3);
\draw [dashed] (4) to [bend right=20] (4.5,-.5);
\draw [dashed] (5) to [bend right=20] (5.5,-.5);
\draw [dashed] (6) to [bend right=20] (6.5,-.5);
\draw [dashed] (7) to [bend right=20] (7.5,-.5);
\draw [dashed] (8) to [bend right=20] (8.5,-.5);
\draw [dashed] (9) to [bend right=20] (9.5,-.5);
\draw [dashed] (10) to [bend right=20] (10.5,-.5);
\draw [dashed] (11) to [bend right=20] (11.5,-.5);

;\end{tikzpicture}
\]

\caption{Pure expansion applied to $M$ (left) to obtain $M'$ (right), with
simple eigenvalues shown}
\label{fig:pureexpansion}
\end{figure}

For the new bottom block of size $a_2$, clearly
$\sright{C'} = \sleft{B}$ and $\tright{C'}=\tleft{B}$,
so by induction both are unbroken.  For the new top block of size $a_1+2a_2$, we claim that:
\begin{eqnarray}
\label{pe1} \sleft{B'}&=&\sleft{B},\\
\label{pe2} \sleft{C'}&=&-\sleft{B},\\
\label{pe3} \sleft{A'}&=&\sleft{A},\\
\label{pe4} \sigma(C'\leftarrow B')&=&[\sigma(\overleftarrow{B}) + 1] \cup [-\sigma(\overleftarrow{B}) + 1],\\
\label{pe5} \sigma(C'\leftarrow A')&=&-\tleft{B}+\gamma+\tleft{A},\text
{ where }\gamma=1,2, \text{ or }3,\\
\label{pe6} \sigma(A'\leftarrow B')&=&-\sigma(C'\leftarrow A')+1,\\
\label{pe7} \tleft{C' \cup A' \cup B'}&=&\left[\tleft{B}-1\right]
\cup \left[\tleft{A}+\gamma-1\right]\cup\tleft{B}.
\end{eqnarray}

Equations \eqref{pe1} through \eqref{pe4} are obvious.
Equation \eqref{pe5} follows from the definitions of 
$\tleft{B}$ and $\tleft{A}$.
Since every eigenvalue in $\sigma(C'\leftarrow A')$ has the form
\[-\epsilon_r-\epsilon_{r-1}-\dots -\epsilon_{1}+\gamma
+\alpha_1+\alpha_2+\dots +\alpha_{q}\]
for some integers $r$ and $q$ such that $p\geq r\geq 0$ and $0\leq q\leq k$,
there is a unique eigenvalue in $\sigma(A'\leftarrow B')$ such that the 
sum of these two eigenvalues is 1, namely
\[\alpha_{q+1}+\alpha_{q+2}+\dots +\alpha_{k}-\gamma
+\epsilon_1+\epsilon_2+\dots +\epsilon_{r}.\]
Equation \eqref{pe6} follows from this.

Next, we show the union of the right hand sides of 
equations \eqref{pe1} through \eqref{pe6} is unbroken.
Since $\sleft{A}$ and $\sleft{B}$ contain 0 and are unbroken by induction,
it follows that the union of the right hand sides of 
equations \eqref{pe1} through \eqref{pe4} is unbroken
and contains both 0 and 1.
Since $\gamma=1,2,$ or $3$, $-\tleft{B}$ is unbroken and 
contains $-1$, and $\tleft{A}$ is unbroken and
contains $0$, we see that the right hand side of equation \eqref{pe5} 
is an unbroken set containing $0,1$, or $2$.
Thus the right hand side of equation \eqref{pe6} is unbroken and 
contains $1,0$, or $-1$, respectively.
It follows that the union of the right hand sides of 
equations \eqref{pe1} through \eqref{pe6} is unbroken.

To establish equation \eqref{pe7}, note that if $p+1\geq r\geq 1$, then
\[-\epsilon_p-\epsilon_{p-1}-\dots -\epsilon_{r}
=-1+\epsilon_1+\epsilon_2+\dots \epsilon_{r-1}
\in\tleft{B}-1,\]
because $\sum_{i=1}^p-\epsilon_i=-1$.
If $0\leq s\leq k$ then
\[-\epsilon_p-\epsilon_{p-1}-\dots -\epsilon_{1}+\gamma+
\alpha_1+\alpha_2+\dots +\alpha_{s}
=-1+\gamma+\alpha_1+\alpha_2+\dots +\alpha_{s}\in\tleft{A}+\gamma-1.\]
And if $0\leq q\leq p$, then
\[-\epsilon_p-\epsilon_{p-1}-\dots -\epsilon_{1}+\gamma+
\alpha_1+\alpha_2+\dots +\alpha_{k}-\gamma
+\epsilon_1+\epsilon_2+\dots +\epsilon_q
=\epsilon_1+\epsilon_2+\dots +\epsilon_q\in\tleft{B},\]
because $\sum_{i=1}^k\alpha_i=1$.  Now, since $\tleft{B}$ is unbroken and contains both 0 and 1, $\tleft{B}-1$ 
is unbroken and contains both $-1$ and 0, and $\tleft{A}+\gamma-1$ is unbroken
and contains 0, 1, or 2 (depending on the value of $\gamma$), we 
conclude that the union
of sets in the right hand side of equation \eqref{pe7} is unbroken.
\end{proof}

As noted in Example \ref{ex:principal}, it seems to be more generally true that the dimensions of
the eigenspaces of a Frobenius seaweed form a unimodal sequence. Furthermore, it appears that this unimodality property holds within each block.  More formally, we have the following conjecture.

\begin{conjecture}\label{conj:block}
Let $A$ be any top or bottom block in a Frobenius seaweed of $\mf{sl}(n)$ 
with $n\geq 2$, and if $d_i$ is the multiplicity of the 
eigenvalue $i$ in the multiset of eigenvalues contributed by $A$, then the sequence $\{d_i\}$ is unimodal about one half.
\end{conjecture}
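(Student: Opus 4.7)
The plan is to prove Conjecture~\ref{conj:block} by induction on the number of Winding-up moves from Lemma~\ref{Expansion}, paralleling and refining the proof of Lemma~\ref{thm:unbroken}. The inductive hypothesis will track multiplicities, not just set membership: for each top block $A$ of a Frobenius meander, the multiset refinement of $\sleft{A}$ should be symmetric and unimodal about one half, and similarly for $\sright{B}$ on bottom blocks; additionally, the multiset refinements of $\tleft{A}$ and $\tright{A}$ should be symmetric and unimodal about their own (possibly different) half-integer centers. Strengthening the hypothesis in this way is forced by equation~\eqref{pe5}, which involves a Minkowski sum of $\tau$-multisets.

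Encoding each multiset $S$ of integers by its generating polynomial $f_S(q)=\sum_k d_k q^k$ translates the operations appearing in Cases~\ref{bc}--\ref{pe} of the proof of Lemma~\ref{thm:unbroken} into familiar polynomial operations: translation $S+c$ becomes multiplication by $q^c$, negation $-S$ becomes the substitution $q\mapsto q^{-1}$, multiset union becomes polynomial sum, and the Minkowski sum $S+T$ of equation~\eqref{pe5} becomes polynomial multiplication. The class of nonnegative, symmetric, unimodal polynomials about a fixed center is closed under all four operations; the nontrivial one, closure of symmetric unimodal polynomials under multiplication (with centers adding), is a classical result of Stanley (see his survey on log-concave and unimodal sequences). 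After verifying the base case $\frac{2}{1|1}$ directly, I would proceed through each Winding-up move. Block Creation and Rotation Expansion follow from equations~\eqref{bc1}--\eqref{bc4} and~\eqref{re1}--\eqref{re7}, respectively: each new block decomposes as a union of symmetric unimodal pieces that all share the center one half, so by the closure properties above the union is symmetric unimodal about one half.

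The main obstacle is the Pure Expansion case. While equations~\eqref{pe1}--\eqref{pe4} pose no additional difficulty (each summand is already symmetric unimodal about one half), the pair $\sigma(C'\leftarrow A')$ and $\sigma(A'\leftarrow B')$ from~\eqref{pe5}--\eqref{pe6} are mirror images under $x\mapsto 1-x$ of the Minkowski sum $-\tleft{B}+\gamma+\tleft{A}$, whose own center depends on $\gamma\in\{1,2,3\}$ and on the centers of $\tleft{A}$ and $\tleft{B}$. Their union is automatically symmetric about one half, but unimodality of the union requires the center of $\sigma(C'\leftarrow A')$ itself to be near one half; two symmetric unimodal bumps positioned far apart and then symmetrized about one half produce a bimodal sequence. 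I expect this is where genuinely new input is needed beyond what Lemma~\ref{thm:unbroken} provides. The most promising route is a refined inductive claim that each $\tau$-multiset has generating polynomial with log-concave coefficients and no internal zeros (log-concavity being preserved under convolution and implying unimodality), with Lemmas~\ref{prop:simple} and~\ref{lem:simple} used to pin down the centers of $\tleft{A}$ and $\tleft{B}$ and the allowable values of $\gamma$ tightly enough that the resulting center of $\sigma(C'\leftarrow A')\cup\sigma(A'\leftarrow B')$ lands within one unit of one half, which is exactly what is needed for the two mirrored bumps to merge into a single unimodal peak.
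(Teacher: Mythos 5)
First, a framing point: the statement you are proving is left \emph{open} in the paper. Conjecture \ref{conj:block} has no proof there; the authors prove only symmetry (Lemma \ref{thm:symmetric}) and unbrokenness (Lemma \ref{thm:unbroken}), and their remark immediately following the conjecture says that extending the technique of Theorem \ref{GGConj} ``will likely require a refinement, or different set, of Winding-down moves.'' So there is no paper proof to compare against, and your proposal must stand on its own as a complete argument. It does not: you yourself concede that the Pure Expansion case needs ``genuinely new input,'' and that concession is the whole problem --- the step you leave open is exactly the obstruction that the authors themselves evidently could not get past, which is why the statement is a conjecture rather than a theorem.

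To be concrete about why that step is a genuine gap and not a routine verification: in Pure Expansion the piece $\sigma(C'\leftarrow A')=-\tleft{B}+\gamma+\tleft{A}$ of equation \eqref{pe5} has center $\gamma+c_A-c_B$, where $c_A$ and $c_B$ are the centers of the $\tau$-multisets. Nothing in the paper controls $c_A$ and $c_B$: the $\tau$-sets are partial sums of simple eigenvalues, and while Lemma \ref{lem:simple} bounds each simple eigenvalue by $3$ in absolute value, it gives no bound on how far the partial sums (hence these centers) sit from one half. Your proposed repair --- log-concavity of the $\tau$-multisets plus the claim that the center of \eqref{pe5} lands within one unit of one half --- is stated as a hope, with no argument offered for the center bound, and the mirrored pair \eqref{pe5}--\eqref{pe6} really can union to a bimodal sequence if that bound fails. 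There are also secondary, unaddressed issues: your plan assumes the identities of Cases 1--3 upgrade verbatim from sets to multisets, and they do not --- as written, \eqref{bc3}, \eqref{re5}, and \eqref{pe4} double-count the diagonal pairs $i=j$ on their right-hand sides; equation \eqref{re7} only identifies $S$ as \emph{some} subset of $\tleft{A\cup B}$, which is useless for multiplicity bookkeeping; the conjecture's multiset \eqref{top contribute} carries $\floor{\frac{a_k}{2}}$ extra zeros that the $\sigma$-multisets never see; and the symmetry, unimodality, and log-concavity of the $\tau$-multisets about their own centers is itself an unproven inductive hypothesis you introduce. Individually these may be repairable, but together with the open Pure Expansion step, what you have is a plausible research plan --- essentially the one the paper's own remark gestures at --- not a proof.
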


\begin{remark}  The proof technique of Theorem \ref{GGConj} might 
be extended to establish Conjecture \ref{conj:block}.
However, this will likely require a refinement, or different set, of Winding-down moves.
%The complication lies in situations such as the one detailed in equation \eqref{re6} of Case 2.  Unimodality means that the measures are clustered near the center value of one-half.  However, in equation \eqref{re6} of Case 2, the measures contributed by the pairs between $B$ and $A$ may not be centered around one-half.  The addition of 1 to these measures may drive the set of measure further from the center. So, the addition of a set which ensures that the measures are still unbroken may not be enough to ensure unimodality.
Even so, extensive simulations suggest that the multiplicites do form a unimodal distribution.  In fact, more seems to be true.  Let $\lambda$ be a positive integer and let $F_\lambda$ be a Frobenius seaweed with spectrum $\{1-\lambda,\ldots ,\lambda\}$ and let 
$d_i(F_\lambda)$ equal the dimension of the eigenspace corresponding to the eigenvalue $i\in \{1-\lambda,\ldots ,\lambda\}$.  Consider now a sequence of such seaweeds $\{F_\lambda\}_{\lambda=1}^{\lambda=\infty}$ and an associated sequence of random variables  $\{X_\lambda\}_{\lambda=1}^{\lambda=\infty}$
defined by $P(X_\lambda = i)=\frac{d_i(F_\lambda)}{\dim F_\lambda}$.
%With modest conditions on the variances of the random variable, such as Var $X_\lambda \rightarrow \infty$, 
We conjecture that the sequence of random variables $\{X_\lambda\}_{\lambda=1}^{\lambda=\infty}$ converges in distribution to a normal distribution
(see \textbf{\cite{Bender}}).
\end{remark}

\section{Afterword} Generally the eigenvalues of the adjoint of a principal element of a Frobenius Lie algebra $\mathfrak{g}$ need not be integers (see 
\textbf{\cite{Diatta}} for examples).  Here, we have shown that if $\mathfrak{g}$ is a Frobenius seaweed subalgebra of $\mathfrak{sl}(n)$ then its spectrum must consist of an unbroken multiset of integers with multiplicities forming a symmetric distribution about one half.

In a forthcoming sequence of articles, we show that Theorem \ref{GGConj} is also true if one considers Type-B, Type-C, and Type-D Frobenius seaweeds.  The argument in the Type-B and Type-C cases is made interesting (in different ways) by the existence of an exceptional root, while the Type-D case is complicated by the bifurcation point in its associated Dynkin diagram.

Finally, we note that while the unbroken spectrum is a property of Frobenius seaweed subalgebras of $\mathfrak{sl}(n)$ it is not characteristic.  Many Frobenius Lie poset subalgebras \textbf{\cite{CG}} of $\mathfrak{sl}(n)$ have the unbroken property but are not seaweed subalgebras.  See the following example. 
 
\begin{example}  Consider the poset $\cal P =$ $\{1,2,3,4\}$ with $1,2 \preceq 3 \preceq 4$ and no relations other than those following from these.  Letting 
$\mathbb{C}$ be the ground field, one may construct an 
associative matrix algebra $A(\cal P, \mathbb{C})$ 
which is the span over $\mathbb{C}$ of $e_{ij}$, $i\preceq j$ with multiplication given by
$e_{i,j}e_{l,k}=e_{i,k}$ if $j=l$ and 0 otherwise.  The associative algebra $A(\cal P, \mathbb{C})$ becomes a Lie algebra $\mathfrak{g}(\cal P, \mathbb{C})$ under commutator multiplication and, if one considers only the elements of trace 0, may be regarded as a Lie subalgebra of 
$\mathfrak{sl}(4)$:  In fact, a Frobenius Lie subalgebra with Frobenius functional $F= e_{1,4}^* + e_{2,4}^*+e_{2,3}^*$, principal element 
$\hat{F}=\rm{diag}\left(\frac{1}{2},\frac{1}{2},-\frac{1}{2},-\frac{1}{2}\right)$, and spectrum $\{0,0,0,0,1,1,1,1\}$. The algebra $\mathfrak{g}(\cal P, \mathbb{C})$ has rank 3 and dimension 8. However, the only Frobenius seaweed subalgebra of $\mathfrak{sl}(4)$ with the same rank and dimension is of type $\frac{2|2}{1|3}$, but the latter has spectrum given by  the multiset $\{-1,0,0,0,1,1,1,2\}$.
\end{example}

\subsection*{Acknowledgment}

The authors would like to thank Murray Gerstenhaber for many insightful conversations.  We also thank Jim Stasheff for giving the first author multiple occasions to speak on these topics at the University of Pennsylvania's Deformation Theory Seminar.  We are also indebted to Anthony Giaquinto and Aaron Lauve for pointing out an error in a previous version of this work.


\begin{thebibliography}{100}\frenchspacing\raggedright
\small

\bibitem{BD}
A. Belavin, V. Drinfel'd.
\newblock {Solutions of the classical Yang--Baxter equations for simple Lie
  algebras}.
\newblock {\em Funct. Anal. Appl.}, 16:159--180, 1982.

\bibitem{Bender}
E. Bender.
\newblock{Central and local limit theorems applied to asymptotic enumeration}.
\newblock \textit{J. Comb. Theory (A)}, 15:91-111, 1973.



\bibitem{CGM11}
V.~Coll, A.~Giaquinto, and C.~Magnant.
\newblock {Meander graphs and Frobenius seaweed Lie algebras}.
\newblock {\em J. Gen. Lie Theory}, 5:Article ID G110103, 2011.

\bibitem{CG}
V.~Coll, M.~Gerstenhaber.
\newblock {Cohomology of Lie semidirect products and poset algebras}.
\newblock {\textit{J. Lie Theory}}, 26:79-95, 2016. 

\bibitem{CHMW}
V.~Coll, M.~Hyatt, C.~Magnant, and H. Wang.
\newblock {Meander graphs and Frobenius seaweed Lie algebras II}.
\newblock {\em J. Gen. Lie Theory}, 9:1, 2015.

\bibitem{CHM}
V.~Coll, M.~Hyatt, and C.~Magnant, Symplectic meanders, 
 arXiv:1602.01146, 2016.

\bibitem{CMW}
V.~Coll, C.~Magnant, and H. Wang. The signature of a meander, 
 arXiv:1206.2705, 2012.

\bibitem{DK}
V.~Dergachev, A.~Kirillov.
\newblock {Index of Lie algebras of seaweed type}.
\newblock {\em J. Lie Theory}, 10:331--343, 2000.

\bibitem{Diatta}
A.~Diatta, B.~Manga.
\newblock {On properties of Principal elements of Frobenius Lie algebras}.
\newblock {\textit{J. Lie Theory}}, 24:849-864, 2014.

\bibitem{GG:Boundary}
M.~Gerstenhaber, A.~Giaquinto.
\newblock {Boundary solutions of the classical Yang-Baxter equation}.
\newblock {\em Letters Math. Phys.}, 40:337--353, 1997.

\bibitem{GG:Frob}
M.~Gerstenhaber, A.~Giaquinto.
\newblock {Graphs, Frobenius functionals, and the classical Yang-Baxter equation}.
\newblock {arXiv:0808.2423}, 2008.
  
\bibitem{G:Principal}
M.~Gerstenhaber, A.~Giaquinto.
\newblock {The principal element of a Frobenius Lie algebra}.
\newblock {\em Letters Math. Phys.}, 88:333--341, 2009.

\bibitem{Joseph}
A.~Joseph.
\newblock {On semi-invariants and index for biparabolic (seaweed) algebras, I}.
\newblock {\em J. of Algebra}, 305:487--515, 2006.

\bibitem{Ooms}
A.~I. Ooms.
\newblock {On Lie algebras having a primitive universal enveloping algebra}.
\newblock {\em J. Algebra}, 32:488--500, 1974.

\bibitem{Ooms2}
A.~I. Ooms.
\newblock {On Frobenius Lie algebras}.
\newblock {\em Comm. Algebra}, 8:13--52, 1980.

\bibitem{Panyushev}
D.~Panyushev.
\newblock{Inductive formulas for the index of seaweed Lie algebras}.
\newblock{\em Mosc. Math. Journal}, 2:221-241, 2001.


\bibitem{PY}
D. Panyushev, O. Yakimova, On seaweed subalgebras and meander graphs in Type C, arXiv:1601.00305v1, 2016.





\end{thebibliography}
\end{document}